\documentclass[pdflatex,sn-mathphys-num]{sn-jnl}
\usepackage[utf8]{inputenc}
\usepackage{textcomp}
\usepackage{amsmath}
\usepackage{amssymb}
\usepackage[version=4]{mhchem}
\usepackage{siunitx}
\usepackage{hhline}

\usepackage[utf8]{inputenc}
\usepackage[T1]{fontenc}
\usepackage{xcolor}
\usepackage{booktabs}
\usepackage{amsfonts}
\usepackage{nicefrac}
\usepackage{microtype}
\usepackage{graphicx}
\usepackage{soul}
\usepackage{tikz}
\usetikzlibrary{shapes.geometric} 

\sethlcolor{lightgray}

\renewcommand{\figurename}{Fig.}
\usepackage{graphicx}%
\usepackage{multirow}%

\usepackage{mathrsfs}%
\usepackage{manyfoot}%
\usepackage{textcomp}%
\usepackage{booktabs}%
\usepackage{algorithm}%
\usepackage{algorithmicx}%
\usepackage{algpseudocode}%
\usepackage{listings}%
\usepackage{bm}
\usepackage{soul}
\usepackage{adjustbox}
\usepackage{tablefootnote}
\usepackage[normalem]{ulem}
\usepackage{placeins}

\usepackage{pifont}

\usepackage{enumitem}
\setlist[enumerate]{itemsep=0mm, topsep=0pt}
\setlist[enumerate]{nosep}

\usepackage{subcaption}
\newcommand{\pos}[1]{\bm{r}_{#1}}
\newcommand{\vel}[1]{\bm{v}_{#1}}
\newcommand{\vella}[1]{\bm{v}_{#1}^ \la}

\newcommand{\ma}[1]{M_{#1}}

\newcommand{\mad}[1]{M_{#1}^{dep}}
\newcommand{\tof}{t}
\renewcommand{\t}{T}
\newcommand{\la}{\Gamma}

\newcommand{\dist}{s^\la}

\newcommand{\sma}[1]{a_{#1}}
\newcommand{\ecc}[1]{e_{#1}}
\newcommand{\inc}[1]{i_{#1}}
\newcommand{\aop}[1]{\omega_{#1}}
\newcommand{\raan}[1]{\Omega_{#1}}
\newcommand{\mm}[1]{n_{#1}}
\newcommand{\oet}[1]{\text{\bm{{\oe}}}_{#1}}

\newcommand{\norm}[1]{\left|#1\right|}
\newcommand{\dv}[1]{\Delta \bm{v}_{#1}}

\newcommand{\fv}{\bm{X}}
\renewcommand{\c}{\bm{F}}

\newcommand{\csr}{\bm{\lambda_{r}}}
\newcommand{\csv}{\bm{\lambda_{v}}}
\newcommand{\primer}{\bm{p}}

\newcommand{\mb}{\begin{bmatrix}}
\newcommand{\me}{\end{bmatrix}}
\newtheorem{prop}{Proposition}

\definecolor{colgay}{RGB}{0, 120, 180}

\newcommand{\transpose}{\intercal}

\definecolor{colFamOne}{RGB}{0,114,189}      
\definecolor{colFamTwo}{RGB}{217,83,25}      
\definecolor{colFamThree}{RGB}{237,177,32}   
\definecolor{colFamFour}{RGB}{126,47,142}    
\definecolor{colFamFive}{RGB}{119,172,48}    
\definecolor{colFamSix}{RGB}{77,190,238}     
\definecolor{colFamSeven}{RGB}{162,20,47}    
\definecolor{colFamEight}{RGB}{0,100,0}      
\definecolor{colFamNine}{RGB}{255,128,0}     
\definecolor{colFamTen}{RGB}{0,153,136}      
\definecolor{colFamEleven}{RGB}{204,0,204}   
\definecolor{colFamTwelve}{RGB}{51,51,255}   
\definecolor{colFamThirteen}{RGB}{255,0,127} 
\definecolor{colFamFourteen}{RGB}{139,69,19} 
\definecolor{colFamFourteen}{RGB}{139,69,19} 
\definecolor{colFamFifteen}{RGB}{75,0,130}    
\definecolor{colFamSixteen}{RGB}{128,128,0}   
\definecolor{goals}{RGB}{0, 0, 255}

\definecolor{colFamOneNew}{HTML}{80318E} 
\definecolor{colFamTwoNew}{HTML}{D9551C}      
\definecolor{colFamThreeNew}{HTML}{80318E} 
\definecolor{colFamFourNew}{HTML}{80318E}    
\definecolor{colFamFiveNew}{HTML}{77AB30}    
\definecolor{colFamSixNew}{HTML}{49BDEE}     
\definecolor{colFamSevenNew}{HTML}{A1122D}    
\definecolor{colFamEightNew}{HTML}{106E10}      
\definecolor{colFamNineNew}{HTML}{FF7F00}     
\definecolor{colFamTenNew}{HTML}{009785}      
\definecolor{colFamElevenNew}{HTML}{009785}   
\definecolor{colFamTwelveNew}{HTML}{3030FF} 

\definecolor{colMin}{rgb}{0.000, 0.243, 0.800}
\definecolor{colMax}{rgb}{0.753, 0.000, 0.188}
\definecolor{colSad}{rgb}{0.133, 0.545, 0.133}

\newcommand{\linefamsolid}[1]{%
  \tikz[baseline=-0.5ex]{
    \draw[line width=1.5pt, color=#1] (0,0) -- (2.0,0);
  }%
}

\newcommand{\redfilledstar}{%
  \tikz[baseline=-0.5ex]\node[
    star,
    star points=5,
    star point ratio=2.5,
    draw=black,
    fill=red,
    line width=0.4pt,
    inner sep=0pt,
    minimum size=0.9em
  ] {};
}

\newcommand{\redfilledcircle}{%
  \tikz[baseline=-0.5ex]\node[
    circle,
    draw=black,
    fill=red,
    line width=0.4pt,
    inner sep=0pt,
    minimum size=0.7em
  ] {};
}

\newcommand{\redfilledsquare}{%
  \tikz[baseline=-0.5ex]\node[
    rectangle,
    draw=black,
    fill=red,
    line width=0.4pt,
    inner sep=0pt,
    minimum size=0.7em
  ] {};
}

\newcommand{\redfilledtriangle}{%
  \tikz[baseline=-0.5ex]\node[
    regular polygon,
    regular polygon sides=3,
    draw=black,
    fill=red,
    line width=0.4pt,
    inner sep=0pt,
    minimum size=0.9em
  ] {};
}

\newcommand{\aOneD}{$10000$}        
\newcommand{\eOneD}{$0.1$}          
\newcommand{\iOneD}{$0^\circ$}            
\newcommand{\OmOneD}{$0^\circ$}           
\newcommand{\omOneD}{$0^\circ$}           
\newcommand{\MzeroOneD}{$0^\circ$}        
\newcommand{\aOneA}{$8000$}         
\newcommand{\eOneA}{$0.1$}          
\newcommand{\iOneA}{$30^\circ$}        
\newcommand{\OmOneA}{$45^\circ$}       
\newcommand{\omOneA}{$45^\circ$}       
\newcommand{\MzeroOneA}{$0^\circ$}        

\definecolor{colort}{rgb}{1.00,0.07,0.65}   

\newcommand{\taken}[1]{\textcolor{black}{#1}}

\usepackage[acronym]{glossaries}
\newacronym{tpbvp}{TPBVP}{Two-Point Boundary Value Problem}

\newacronym{stm}{STM}{State Transition Matrix}
\newacronym{pvt}{PVT}{Primer Vector Theory}

\newacronym{lvlh}{LVLH}{Local Vertical Local Horizontal}

\newacronym{tiot}{TIOT}{Two-Impulse fuel-Optimal rendezvous Transfer}
\newacronym{tiots}{TIOTs}{Two-Impulse fuel-Optimal rendezvous Transfers}
\glsdisablehyper
\DeclareNewFootnote{A}[gobble]
\begin{document}

\title[Families of Two-Impulse Optimal Rendezvous Transfers Between Elliptic Orbits]{Families of Two-Impulse Optimal Rendezvous Transfers Between Elliptic Orbits}

\author[1]{\fnm{Beom} \sur{Park}}\email{park1103@purdue.edu}
\author[1]{\fnm{Kathleen C.} \sur{Howell}}\email{howell@purdue.edu}
\author*[2]{\fnm{Jaewoo} \sur{Kim}}\email{jw.kim@kaist.ac.kr}
\author[2]{\fnm{Jaemyung} \sur{Ahn}}\email{jaemyung.ahn@kaist.ac.kr}

\affil[1]{School of Aeronautics and Astronautics, Purdue University, West Lafayette, IN, 47906}
\affil[2]{Department of Aerospace Engineering, Korea Advanced Institute of Science and Technology, Daejeon, 34141, Republic of Korea}


\abstract{The classical fuel-optimal two-impulse rendezvous problem between Keplerian orbits is revisited from a family-based perspective. Conventional approaches often yield isolated optimal solutions whose mutual relationships remain unclear; yet, when re-parameterized appropriately, seemingly unrelated optima are revealed to be connected members of continuous solution families. To expose this structure, the proposed framework enforces a subset of first-order necessary optimality conditions and traces the resulting one-parameter families via numerical continuation. The families are classified using Hessian-based criteria and Primer Vector Theory, and are projected onto porkchop plots to connect the angular and temporal domains. Representative case studies reveal the emergence, merging, and disappearance of locally optimal branches under variations in orbital geometry, supplying a global map of the solution landscape. This complementary perspective clarifies the robustness of optimal solutions and identifies alternative near-optimal transfers in the vicinity of a nominal trajectory.}

\maketitle

\section{Introduction}\label{sec:intro}

Since the dawn of spaceflight, fuel-optimal trajectories between Keplerian orbits have held both practical and theoretical significance. The most fundamental example is the Hohmann transfer \cite{hohmann1960}, representing the simplest case of a two-impulse maneuver connecting coplanar circular orbits without rendezvous constraints. Any relaxation of these assumptions, whether in orbital geometry, thrust model, or mission constraints, rapidly increases analytical complexity and has motivated decades of research \cite{gobetz1969survey, walsh2020review, morante2021survey}. For instance, generalizations may allow for arbitrary eccentricities or inclinations \cite{vinh1988optimal}. The number of burns may also vary; bi-elliptic transfers, for example, can outperform the Hohmann transfer when large energy or plane changes are required \cite{vallado2022fundamentals}. Introducing finite-thrust propulsion rather than impulsive burns further expands the feasible design space, significantly complicating the optimization problem \cite{lawden1963optimal, longuski2014optimal}. Furthermore, higher-fidelity dynamical models, such as those required for the cislunar environment \cite{holzinger2021primer, saloglu2024acceleration, oshima2024regularizing} or bodies with significant oblateness \cite{yang2022fast}, often render fuel-optimal strategies derived from two-body assumptions insufficient. Conversely, within linearized dynamical regimes, simplified analytical formulations remain valuable and have fostered numerous framework developments, e.g., \cite{prussing1995optimal, carter1991optimal, mcmahon2016linearized}.

Despite these extensive generalizations of fuel-optimal trajectory design, the current analysis returns to one of the most fundamental yet still incompletely understood scenarios: the \acrfull{tiots} between general elliptic, Keplerian orbits. The importance of this configuration cannot be overstated; it represents the simplest mission architecture, characterized by short transfer times and analytical tractability, while still exhibiting rich and nontrivial optimal structures. As noted by Saloglu et al. \cite{saloglu2023existence} and Saloglu and Taheri \cite{saloglu2025classification}, \acrshort{tiots} form the building blocks of more complex multiple-impulse solutions. Similarly, low-thrust transfers may also be continued from these simple impulsive-burn optimal trajectories \cite{zhu2017solving}. Therefore, understanding their organization, bifurcations, and global behaviors remains essential for extending insights to broader mission scenarios.

\taken{Although such transfers are conventionally analyzed in the time domain, where locally optimal solutions often appear as isolated, widely separated points, many of these seemingly unrelated optima are in fact connected members of a single continuous family. This family structure, obscured in the time domain representation, becomes apparent once the transfers are re-parameterized by the angular positions along each orbit. A concrete demonstration is presented in Section~\ref{sec:dimensionality}, after the necessary definitions are established.}\footnote{The quasi-periodic recurrence of favorable transfer geometries is well established in planetary mission design (e.g., Earth--Mars synodic windows) and has long informed heuristic search strategies for \acrshort{tiots} in long-horizon planning contexts, e.g., \citet{bang2018two}; however, a systematic global characterization of the underlying solution structure has remained largely absent.}

Motivated by this \taken{structure}, the current study develops a framework that systematically identifies and traces such families of \acrshort{tiots} between general elliptic orbits. By enforcing a subset of first-order necessary optimality conditions, the methodology reduces the three-dimensional search space to a one-parameter continuation problem, connecting isolated local minima into continuous solution curves. The resulting families reveal the global organization of optimal transfers, including the coalescence, emergence, and disappearance of distinct branches as orbital geometry varies, and supply a natural basis for assessing solution robustness.\footnote{\taken{A conceptually related philosophy, i.e., leveraging homotopy/continuation to expose the global structure of a trajectory solution space rather than isolated optima, also appears in low-thrust trajectory optimization, e.g., \citet{zhang2023solution}.}} By projecting these families onto the temporal domain, the framework further connects to conventional porkchop-plot analyses, bridging geometric insight with epoch-specific mission planning. 

The remainder of this paper is organized as follows. Section \ref{sec:background} introduces the preliminaries, including the Lambert problem formulation and a summary of existing TIOT strategies. Sections \ref{sec:optimization}--\ref{sec:framework} present the proposed methodology: Section \ref{sec:optimization} defines \textit{families} of TIOTs and formalizes the relationships among family members under a generalized notion of optimality; Section \ref{sec:domain} defines two optimality domains (angular and temporal), characterizes their asymptotic behavior, and establishes the correspondence between the two formulations; and Section \ref{sec:framework} describes the numerical procedure used to identify and analyze TIOT families. Section \ref{sec:cases} applies the framework to multiple departure--arrival orbit pairs to illustrate its use and to extract case-based insights. Finally, Section \ref{sec:conclusions} concludes the paper.

\section{\label{sec:background}Background}
This section provides the background for the present study. The Lambert problem, a key building block for TIOT solutions, is first introduced along with the nomenclature and conventions used throughout. Three established classes of TIOT strategies are then reviewed: (1) geometry-based analysis, (2) numerical optimization, and (3) \acrfull{pvt}.

\subsection{Problem Formulation}

The \acrshort{tiot} problem between two elliptic orbits is introduced. The fixed orbital elements of the departure and arrival ellipses are denoted as,
\begin{align}
    \oet{j} = \left[\sma{j}, \ecc{j}, \inc{j}, \raan{j}, \aop{j}\right]^\intercal,\quad \text{for }j=1,2,
\end{align}
where the symbols $a, e, i, \Omega, \omega$ represent the classical orbital elements, i.e., semi-major axis, eccentricity, inclination, right ascension of the ascending node, and the argument of periapsis, respectively, defined within an arbitrary inertial frame. While the formulation extends to any central body, the current investigation primarily focuses on Earth-centered scenarios. Subscripts $1,2$ correspond to the departure and arrival, respectively. The sixth element specifies the location of the spacecraft along the orbit. Mean anomalies are mainly adopted in the current work, i.e.,
\begin{align}
    \label{eq:M_1}& \ma{1} = n_1\t + \ma{1}^0, \\
    \label{eq:M_2^dep}& \mad{2} = n_2\t + \ma{2}^0, \\
    \label{eq:M_2}& \ma{2} = n_2(\t+\tof) + \ma{2}^0,
\end{align}
where $n_i$ is the mean motion supplied from $\sma{i}$, $\t$ is time measured from an arbitrary reference time, and $\ma{i}^0$ is the mean anomaly at the arbitrary reference time. The two-impulsive burn rendezvous \taken{maneuver} leads to one transfer conic; denote the transfer time (time-of-flight) as $\tof$. Note that $\mad{2}, \ma{2}$ refer to mean anomalies upon the departure time ($\t$) and arrival time ($\t+\tof$), respectively. Thus, the transfer arc links the position vectors along the departure and arrival ellipses at,
\begin{align}
    \label{eq:r1}\pos{1} &= \pos{1}(\t) = \pos{1}(T; \oet{1}, \ma{1}^0 ), \\
    \label{eq:r2}\pos{2} & = \pos{2}(T +\tof) = \pos{2}(T + \tof; \oet{2}, \ma{2}^0 ),
\end{align}
defined in an inertial frame. Finding a transfer arc satisfying the boundary constraints (Eqs. \eqref{eq:r1}-\eqref{eq:r2}) under Keplerian dynamics with a specified time-of-flight $\tof$ constitutes the Lambert problem \cite{battin1999introduction}. It solves for a transfer conic $\la$ defined as,
\begin{align}
    \label{eq:la} \la = \la(\pos{1}, \pos{2}, \tof; \mu, N, d_1, d_2).
\end{align}
The gravitational parameter of the central celestial body is denoted as $\mu$. The number of revolutions is $N$, defined as $N = \lfloor \theta/(2\pi)\rfloor $ with $\theta$ denoting the change in true anomaly along the Lambert arc $\la$. The switch $d_1$ resolves the ambiguity in $\theta$ and $2\pi - \theta$ for a given set of $\pos{1}$ and $\pos{2}$; $d_1$ ($\in \{\mathrm{s}, \mathrm{l}\}$) denotes short and long transfers with $0 < \theta - 2N\pi < \pi$ and $\pi < \theta - 2N\pi < 2\pi$, respectively. The last parameter $d_2$ ($\in \{0, 1\}$) resolves the additional geometrical ambiguity that arises in transfers with $N \geq 1$; for a given $N$, two distinct transfer ellipses connect the same boundary positions in the same flight time, differing in their semi-major axes. Following Russell \cite{russell2019solution}, $d_2$ distinguishes these two solutions by the location of the second focus of the transfer ellipse relative to the chord line. Then, the cost of the two-impulsive burn rendezvous \taken{maneuver} renders,
\begin{align}
    \label{eq:cost} J : = \norm{\dv{1}} + \norm{\dv{2}},
\end{align}
where $1,2$ are subscripts for the burns at the departure and final orbits \taken{and $\norm{\cdot}$ denotes the Euclidean norm of a vector}. The difference in the velocity vector constitutes the impulsive burns as,
\begin{align}
    \dv{1} = \vella{1} - \vel{1}, \quad \dv{2} = \vel{2} - \vella{2},
\end{align}
where $\vel{1}$ and $\vel{2}$ are velocity vectors along the departure and arrival orbits and $\vella{1}$ and $\vella{2}$ are velocity vectors along the Lambert arc $\la$ at the initial and final times, i.e., $\t$ and $\t + \tof$, respectively. For a fixed set of departure and arrival orbital elements, the cost function consists of the following independent variables and parameters,
\begin{align}
    J = J(\t, \tof; \oet{1}, \ma{1}^0, \oet{2}, \ma{2}^0, \mu, N, d_1, d_2).
\end{align}
It is typical to analyze different combinations for $N, d_1, d_2$ as they constitute discrete parameters. In the current analysis, $N = 0$ is analyzed, establishing the simplest case. For brevity, the parameters ($\oet{1}, \ma{1}^0, \oet{2}, \ma{2}^0, \mu, N, d_2$) are omitted in the expressions in the rest of the document as they are fixed values in the current analysis, leaving dependency only on $d_1 \in \{s, l\}$. Utilizing the relationship between the time and angles from Eqs. \eqref{eq:M_1}-\eqref{eq:M_2}, the cost is alternatively represented as,
\begin{align}
    \label{eq:cost_parameters} J = J(\t,\tof; d_1) = J(\ma{1}, \ma{2}, \tof; d_1) = J(\ma{1}, \mad{2}, \tof; d_1).
\end{align}
The alternative domains with angles aid the extraction of patterns, as demonstrated later. Consequently, the problem of locating \acrshort{tiots} is defined as finding the set of independent variables that (locally) minimizes the cost function $J$.

Lambert solvers serve as the foundation for any two-impulse rendezvous problem, since Eq. \eqref{eq:cost} requires solving Eq. \eqref{eq:la}. Although a general closed-form solution does not exist, Keplerian dynamics allows for geometric and semi-analytical exploration. While a comprehensive review lies beyond the scope of this work, notable contributions warrant acknowledgment. Research in this domain generally centers on two aspects: (1) computational efficiency and (2) algorithmic robustness. Singularities within the Lambert problem often impede the successful construction of the solution, motivating continuous improvements toward faster and more robust formulations. Following Lambert's initial formulation and Lagrange's derivation of the time-of-flight equations, modern contributions emerge from Lancaster and Blanchard \cite{lancaster1970solution}. Battin \cite{battin1999introduction} emphasizes iterative techniques with improved convergence properties. Gooding \cite{gooding1990procedure} develops a fast, robust method with guaranteed convergence using a universal variable approach, while Izzo \cite{izzo2015revisiting} proposes an efficient algorithm utilizing variable transformation and root-finding on a bounded domain. This study adopts Izzo's \cite{izzo2015revisiting} formulation for the internal Lambert arc routine. \taken{Although not employed in the present study, recent advances in solving the Lambert problem continue to emerge, e.g., \citet{negrete2025exact} as well as \citet{mcelreath2025universal}.}

\subsection{Locating \texorpdfstring{\acrshort{tiots}}{}: Existing Strategies}
Existing literature on \acrshort{tiots} can be broadly categorized into three approaches: (1) geometry-based analysis, (2) numerical optimization, and (3) \acrfull{pvt}. This section reviews each in turn, highlighting their respective contributions and limitations.

\subsubsection{Geometry-Based Analysis}
By harnessing the geometric properties of the departure, arrival, and transfer orbits, this approach aims for a (semi-)analytical characterization of \acrshort{tiots}. A divide-and-conquer strategy is typically adopted, starting with simplifying assumptions that are gradually lifted in subsequent research. For example, the Hohmann transfer \cite{hohmann1960} constitutes the simplest scenario with two coplanar, circular orbits, admitting a proven globally optimal solution \cite{hazelrigg1984globally, prussing1992simple, palmore1984elementary}. Transfers involving inclined circular orbits are discussed in Baker \cite{baker1966orbit}, while optimal transfers between coplanar ellipses are detailed in Pontani \cite{pontani2009simple} and Zaborsky \cite{zaborsky2014analytical}. In these analyses, bi-elliptic transfers serve as a competing option, augmenting the optimality landscape. However, to the authors' best knowledge, analytical criteria for locating optimal transfers within the most general formulation, involving non-coplanar, non-coaxial ellipses, remain elusive. In such complex configurations, geometric heuristics are often examined as proxies for true optimal solutions. For example, Vinh et al. \cite{vinh1988optimal} investigate the optimal ``nodal'' transfer, where two impulsive burns are placed on the common line of nodes; such a configuration is also analyzed in Baker \cite{baker1966orbit} and denoted as ``modified Hohmann transfer.'' Likewise, Lacruz and {\'A}ngeles \cite{lacruz2024coaxial} examine transfer geometries with certain properties, e.g., coaxial with the departure or arrival orbits.

\subsubsection{Numerical Methods: Grid Search and Optimization} 

Advances in Lambert solvers \cite{gooding1988solution, battin1984elegant, izzo2015revisiting, russell2019solution} enable large-scale parametric searches and visualizations such as porkchop plots. The porkchop plot serves as the de facto graphical strategy for surveying the solution space. The grid typically spans the $(\t, \tof)$ domain, or equivalently, the $(\t, \t+\tof)$ domain, where the Lambert arcs and resulting costs $J$ are evaluated iteratively. For example, \taken{consider} the baseline transfer scenario (Table~\ref{table:sample_orbits_first})\footnote{Unless otherwise stated, all numerical examples and family analyses in this study are referenced to the baseline transfer case defined in Table~\ref{table:sample_orbits_first}.}. The synodic period of the two orbits is approximately $P_{syn} = 2\pi / (\mm{2} - \mm{1}) \approx 7$ hours. The porkchop plot in Fig.~\ref{fig:porkchop_2d_3d_2d.png} is generated for $N = 0$ and $d_1 = \mathrm{l}$ (long transfer, $\pi < \theta < 2\pi$), with $0 \leq \t \leq 2P_{syn}$ and $0.2 \leq \tof \leq 4$ hours over a $200 \times 100$ grid. The colormap represents the cost $J$ defined in Eq. \eqref{eq:cost}, where darker blue regions correspond to locally optimal Lambert arcs connecting the two orbits. Although grid search provides a global view of the cost landscape, its limited sampling resolution may not capture all locally optimal solutions, and the resulting patterns offer limited insight into the connectivity and continuity of optimal configurations.

\begin{table}[htpb]
\centering
\caption{\label{table:sample_orbits_first}
Orbital elements defining the baseline transfer case between two Keplerian orbits in an Earth-centered inertial frame (notation is formally defined in Section~\ref{sec:background})}
\begin{tabular*}{\textwidth}{@{\extracolsep\fill}lcccccc}
\toprule
Orbit & $a$ [km] & $e$ [-] & $i$ [deg] & $\Omega$ [deg] & $\omega$ [deg] & $M^{0}$ [deg] \\
\midrule
Orbit 1 (Departure) & \aOneD & \eOneD & \iOneD & \OmOneD & \omOneD & \MzeroOneD \\
Orbit 2 (Arrival)   & \aOneA & \eOneA & \iOneA & \OmOneA & \omOneA & \MzeroOneA \\
\bottomrule
\end{tabular*}
\end{table}

 \begin{figure}[h!]
    \centering
    \begin{subfigure}[b]{0.98\textwidth}
        \centering
        \includegraphics[page=1, width=0.99\textwidth]{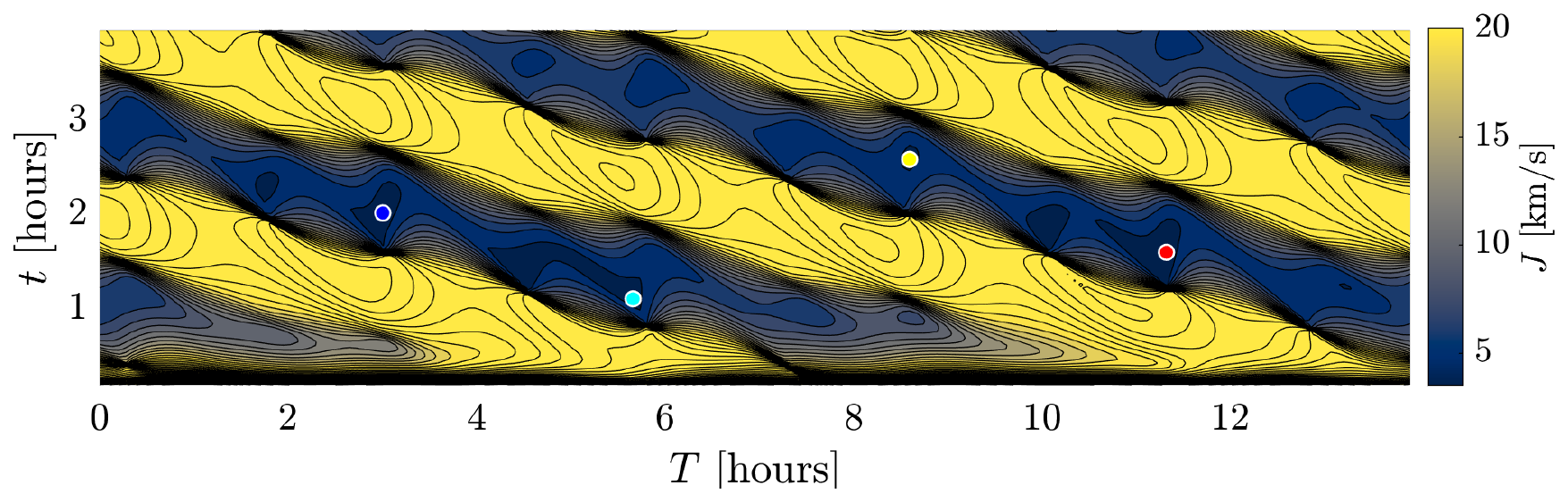}
        \caption{Cost landscape over departure epoch and flight time (markers indicate locally near-optimal transfers\taken{, with $J = 4.30$, $4.50$, $4.54$, and $3.87$~km/s for the blue, cyan, yellow, and red markers, respectively}).}
        \label{fig:porkchop_2d_3d_2d.png}
    \end{subfigure}
    \begin{subfigure}[b]{0.48\textwidth}
        \centering
        \includegraphics[page=3, width=0.99\textwidth]{figures_everything.pdf}
        \caption{Selected locally near-optimal transfers in the Earth-centered inertial frame (all trajectories are prograde, i.e., $i < 90^\circ$).}
        \label{fig:porkchop_2d_3d_transfer_geometry.png}
    \end{subfigure}
    \begin{subfigure}[b]{0.48\textwidth}
        \centering
        \includegraphics[page=2, width=0.99\textwidth]{figures_everything.pdf}
        \caption{Cost landscape re-parameterized by departure and arrival mean anomalies ($\ma{1}$, $\mad{2}$) and flight time.}
        \label{fig:porkchop_2d_3d_3d.png}
    \end{subfigure}
  \caption{Cost landscape and selected transfer geometries for the baseline transfer scenario (Table~\ref{table:sample_orbits_first}).}
  \label{fig:porkchop_2d_3d}
\end{figure}

Various optimization approaches supply another numerical route for locating \acrshort{tiots}. In early works, McCue \cite{mccue1963optimum} and Lee \cite{lee1964analysis} partially enforce necessary optimality conditions to generate contours and analytical insights, while McCue \cite{mccue1965numerical} applies a gradient descent algorithm to locate locally optimal solutions. These optimization efforts are closely tied to geometry-based analysis, as the necessary optimality conditions yield polynomial expressions derived from orbital geometry \cite{stark1961optimum, lee1964analysis}. More recently, \citet{bang2018two} employ a gradient-based search initialized from grid samples, and meta-heuristic optimization schemes \cite{abdelkhalik2007n, pontani2012particle} \taken{to} reduce reliance on the initial grid. Gradient-based methods \cite{mccue1965numerical, jezewski1968efficient} typically offer faster convergence but risk entrapment in local minima, whereas meta-heuristic approaches increase the likelihood of identifying globally optimal solutions at a higher computational cost. However, despite their flexibility, these numerical approaches often provide isolated ``point'' solutions, leaving the physical origins of recurring patterns in the cost landscape opaque.

\subsubsection{\label{sec:pvt}\texorpdfstring{\acrfull{pvt}}{}}

Lawden's \acrshort{pvt} \cite{lawden1963optimal} supplies an analytical framework for verifying the optimality of impulsive trajectories, complementary to the geometry-based and numerical approaches above. Under the impulsive thrust assumption, the costate vector $\left[\csr^\intercal, \csv^\intercal\right]^\intercal$ is augmented to the state vector, i.e., $\left[\bm{r}^\intercal, \bm{v}^\intercal \right] ^\intercal$. Then, the ``primer vector'' is defined as $\primer = -\csv$. According to Lawden's theory \cite{lawden1963optimal}, an optimal impulsive trajectory must satisfy four necessary conditions regarding $\primer$ and its time derivative $d{\primer}/d\tau$\footnote{The generic independent variable for time is $\tau$, differentiated from the transfer parameters $\t, \tof$.}: (1) $\primer$ and $d{\primer}/d\tau$ must be continuous throughout the transfer, (2) the magnitude of the primer vector must not exceed unity ($|\primer| \leq 1$) at any point along the trajectory, (3) at the instants of impulsive burns, the primer vector must be a unit vector aligned with the thrust direction (i.e., $\primer = \Delta\bm{v}/\norm{\Delta\bm{v}}$ and $\norm{\primer}=1$), and (4) if an intermediate coast arc exists between impulses, the derivative of the primer vector magnitude must vanish ($d\norm{\primer}/d\tau = 0$) at any interior point where $\norm{\primer}=1$. While applicable to general dynamical regimes, the specific equations of motion for Keplerian dynamics are briefly recalled as,
\begin{align}
    \label{eq:pvt_dynamics1}& \frac{d\bm{r}}{d\tau} = \bm{v}, \\
    \label{eq:pvt_dynamics2}& \frac{d\bm{v}}{d\tau} = \bm{g}(\bm{r}) = - \frac{\mu}{r^3}\bm{r}.
\end{align}
From optimal control theory \cite{longuski2014optimal}, the costates evolve according to 
\begin{align}
    \label{eq:pvt_dynamics3}& \frac{d\csr}{d\tau} = -\bm{G}\csv, \\
    \label{eq:pvt_dynamics4}& \frac{d\csv}{d\tau} = -\csr,
\end{align}
where $\bm{G} = \frac{\partial \bm{g}}{\partial \bm{r}} = \frac{\mu}{r^5}(3\bm{r} \bm{r}^\intercal - r^2 \bm{I})$ and $\bm{I}$ is a three-by-three identity matrix. The distinct advantage of \acrshort{pvt} over parametric optimization lies in determining the optimal number of impulses. While the parametric framework typically assumes a fixed number of burns, \acrshort{pvt} provides analytical criteria indicating whether intermediate burns would further reduce fuel costs. Building on Lawden's foundation, the literature extends \acrshort{pvt} to multiple-impulse transfers \cite{small1971minimum}, multi-revolution cases \cite{prussing2000class}, and the convergence of finite-thrust trajectories toward impulsive solutions; Taheri and Junkins \cite{taheri2020how} demonstrate that finite-thrust solutions converge to these impulsive limits, confirming the physical significance of the necessary conditions. Consequently, this analysis leverages \acrshort{pvt} as a complementary tool to post-assess the optimality of converged solutions from parametric optimization. However, \acrshort{pvt} remains inherently local; it verifies the optimality of individual trajectories but does not directly reveal the global topology or connectivity of solution families.

\section{\label{sec:optimization}\taken{Solution Dimensionality and One-Parameter Families}}

\taken{Building on the problem formulation in Section~\ref{sec:background}, this section examines the dimensionality of the solution space and introduces a continuation scheme for tracing one-parameter families of \acrshort{tiots}.} Section~\ref{sec:dimensionality} motivates the adoption of angular variables over the conventional time domain, Section~\ref{sec:cont_scheme} defines the continuation scheme based on partial optimality conditions, and Section~\ref{sec:gradient} details the required gradient information and the Hessian-based classification of stationary solutions.

\subsection{\label{sec:dimensionality}\taken{Motivation: Reducing the Search Dimensionality}}

The optimization of two-impulse transfers between elliptic orbits is inherently governed by three design variables \cite{mccue1963optimum}. This three-dimensional domain consists of the (1) departure and (2) arrival locations along the respective orbits and the (3) geometry of the transfer arc, often parameterized by the semi-latus rectum or, equivalently, the time-of-flight. In the present formulation, this three-dimensional domain is represented by the state vector $\bm{X} = [\ma{1}, \ma{2}, \tof]^\intercal$. Consequently, the cost $J$ constitutes a scalar field over this three-dimensional space.

As discussed in Section~\ref{sec:background}, directly solving the full optimization problem (i.e., enforcing $\nabla J = \bm{0}$) yields isolated point solutions, offering limited insight into the global solution structure. Beyond this limitation, the choice of independent variables fundamentally dictates the representation of these optimal structures. While the two angles $\ma{1}$ and $\ma{2}$ (or, $\mad{2}$) are bounded between $[0, 2\pi)$, such angles unravel as a function of $\t$ \taken{(or, a function of both $\t$ and $\tof$ for $\ma{2}$)}, recalling the alternative representations of $J$ from Eq. \eqref{eq:cost_parameters}. Consequently, the typical time domain, $(\t, \tof)$, is often inadequate for revealing the underlying connectivity of solution families. The unraveling of the periodic angular domain into an infinite time axis effectively fragments continuous structures into isolated points, obscuring their relationships. In the time domain, the locally optimal solutions satisfy the first-order necessary conditions,
\begin{align}
    \frac{\partial J}{\partial \t} = \frac{\partial J}{\partial \tof} = 0.
\end{align}
\taken{This behavior is illustrated for the baseline scenario in Fig.~\ref{fig:porkchop_2d_3d}. Figure~\ref{fig:porkchop_2d_3d_2d.png} depicts the cost $J$ over departure time and flight time, on which four locally optimal solutions (colored markers) appear widely separated in the $(\t,\tof)$ space. However, when re-parameterized by the angular positions along each orbit (Fig.~\ref{fig:porkchop_2d_3d_3d.png}), these solutions collapse into a compact neighborhood, confirming that they belong to a continuous family with shared transfer geometries (Fig.~\ref{fig:porkchop_2d_3d_transfer_geometry.png}).} This observation motivates the present framework to navigate the solution space directly within the $\ma{1}-\ma{2}-\tof$ domain.

\subsection{\label{sec:cont_scheme}Families of \texorpdfstring{\acrshort{tiots}}{}}

Building on the motivation from Section \ref{sec:dimensionality}, a numerical continuation scheme is devised to trace one-dimensional families of \acrshort{tiots} within the three-dimensional space. This process leverages a system of nonlinear equations defined over the state vector, $\fv = [\ma{1}, \ma{2}, \tof]^ \transpose$. To generate a continuous family, strictly two optimality conditions are enforced, leaving one degree of freedom unconstrained. With $x_{1} = x_{1}(\bm{X})$ and $x_{2} = x_{2}(\bm{X})$ denoting two linearly independent variables\footnote{The variables $x_{1}$ and $x_{2}$ are intentionally left generic here; their concrete assignments (e.g., $x_1 = \ma{1}, x_2 = \ma{2}$ or $x_1 = \t, x_2 = \tof$) are specified in Section~\ref{sec:domain}.}, the constraint vector is defined as,
\begin{align}
    \label{eq:c}\c(\fv) & = \left[ \frac{\partial J}{\partial x_1}, \frac{\partial J}{\partial x_2} \right]^\transpose = \bm{0},
\end{align}
enforcing the necessary stationary conditions. Then, $\fv, \c$ represent an underdetermined system with 2 equations and 3 unknowns.  By relaxing one of the three optimality conditions, the solution expands from an isolated point to a one-dimensional family parameterized by the remaining variable\footnote{This approach is inspired by \citet{mccarthy2023rephasing} and also leveraged in \citet{gul2024feature}; these prior investigations focus on transfers within non-Keplerian dynamics.}. Given a solution $\fv^*$ that satisfies $\c(\fv^*)= \bm{0}$, the Jacobian of the constraints with respect to the state vector renders,
\begin{align}
    \label{eq:jacobian} \frac{\partial \c}{\partial \fv} \biggr \rvert_{\fv^*}.
\end{align}
Being a matrix of size $2 \times 3$, the Jacobian generally allows a one-dimensional null space. This null vector ($\mathcal{N}(\fv^*)$) informs the linear prediction for the nearby solution along the family as,
\begin{align}
    \label{eq:guess} \fv^\mathrm{p} = \fv^* + ds\cdot \mathcal{N} (\fv^*),
\end{align}
where the superscript $\mathrm{p}$ stands for the ``predicted'' state and $ds$ is a scalar step size along the null space direction. To resolve this prediction into a valid solution, the constraints in Equation \eqref{eq:c} are enforced again via a differential corrections process (e.g., Newton-Raphson method). To render a square system, an ``arclength'' constraint may be enforced as well, 
\begin{align}
    F^A = (\fv - \fv^* )\cdot \mathcal{N} (\fv^*) - ds = 0,
\end{align}
explicitly controlling the step size along the null space direction. For more information regarding this ``pseudo-arclength'' continuation, refer to \citet{seydel2010practical}. 

\subsection{\label{sec:gradient}Gradient Information}

The continuation scheme (Eqs.~\eqref{eq:jacobian}--~\eqref{eq:guess}) requires derivatives of the cost function $J$ with respect to the state vector $\fv$. The first-order derivatives,
\begin{align}
    \label{eq:1st_deriv}\nabla J = \frac{\partial J}{ \partial \fv } = \left[ \frac{\partial J }{\partial \ma{1}} , \frac{\partial J }{\partial \ma{2}} , \frac{\partial J }{\partial \tof} \right]^\transpose,
\end{align}
are evaluated via the \acrfull{stm} along the Lambert arc, following the approach of \citet{schumacher2015uncertain} and \citet{arora2015partial}. Second-order derivatives of $J$ are also required for two purposes: (1) constructing the Jacobian $\partial \c / \partial \fv$ in Eq.~\eqref{eq:jacobian}, and (2) classifying stationary solutions. Derivations for both the first- and second-order derivatives are supplied in \hyperref[app:gradient]{Appendix~A}. Regarding the second-order classification, stationary solutions satisfying Eq.~\eqref{eq:c} may correspond to a minimum, maximum, or saddle of $J$, as determined by the eigenvalues $\lambda_A$ and $\lambda_B$ of the Hessian matrix,
\begin{align}
    \label{eq:hessian} \bm{H} = \mb \frac{\partial ^2J}{\partial x_1^2} & \frac{\partial ^2J}{\partial x_1 \partial x_2} \\ \frac{\partial ^2 J}{\partial x_1 \partial x_2} & \frac{\partial ^2 J}{\partial x_2^2} \me.
\end{align}
Specifically, the arc is classified as minimum ($\lambda_A,\lambda_B > 0$), maximum ($\lambda_A,\lambda_B < 0$), or saddle ($\lambda_A \lambda_B < 0$). Throughout this analysis, different colors denote the Hessian classification as in Table~\ref{table:colormap}. The term \acrshort{tiot} is utilized to collectively denote all stationary solutions satisfying the first-order optimality conditions, regardless of their second-order classification.

\begin{table}[htpb]
\centering
\caption{\label{table:colormap}Classification of stationary solutions based on Hessian information}
\begin{tabular}{p{2cm}c}
\toprule
Optimality & Color \\\midrule
Minimum & \textcolor{colMin}{\ding{108}} \\
Maximum & \textcolor{colMax}{\ding{108}} \\
Saddle & \textcolor{colSad}{\ding{108}} \\
\bottomrule
\end{tabular}
\end{table}

\section{\label{sec:domain}Optimality Domains and  Asymptotic Behaviors}

Since the relaxed-constraint formulation admits two independent variables $x_1$ and $x_2$ (Eq.~\eqref{eq:c}), the optimization problem admits multiple domain representations. This section investigates two such selections: the ``angular domain'' ($x_1=\ma{1}$, $x_2=\ma{2}$), offering analytical insights into the global topology and asymptotic behavior of optimal families, and the ``temporal domain'' ($x_1=\t$, $x_2=\tof$), directly applicable to mission design and porkchop-plot analysis. The two domains are presented sequentially, followed by a discussion of their asymptotic equivalence.

\subsection{Angular Domain (\texorpdfstring{$\ma{1}-\ma{2}$}{})}

Physically, optimization over the angular domain assumes that any geometric configuration of $\ma{1}$ and $\ma{2}$ is accessible given a sufficient time horizon. This formulation effectively decouples the geometric optimization from the specific epoch or waiting time ($\t$). Consequently, the optimality constraints result in,
\begin{align}
    \label{eq:angular_optimality} \left[ \frac{\partial J}{\partial\ma{1}} ,  \frac{\partial J}{\partial\ma{2}}\right]^\intercal = \bm{0}.
\end{align}
While this formulation renders the angular domain somewhat abstract from immediate mission timelines, it offers profound insights into the asymptotic behaviors of optimal families at the limits $\tof \rightarrow 0, \infty$. The main text focuses on the $\tof\rightarrow\infty$ case; for details regarding $\tof\rightarrow0$, refer to \hyperref[app:t0]{Appendix B}. 

As the number of revolutions for the Lambert arc is fixed ($N = 0$ in the current work), the limit $\tof\rightarrow\infty$ implies that the transfer arc asymptotically approaches a parabolic trajectory with respect to the central celestial body, i.e., 
\begin{align}
    \label{eq:la_m}\la \rightarrow \la_{\mathrm{para}} (\ma{1}, \ma{2}; d_1)
\end{align}
Compared to Eq. \eqref{eq:la}, the dependency on $\tof$ is omitted as $\tof \rightarrow \infty$. Thus, optimal families initiate (or terminate) at pairs of $\ma{1}, \ma{2}$ where the cost $J$ constructed with parabolic arcs, is locally stationary; the velocities $\vella{1}$ and $\vella{2}$ may be constructed semi-analytically following the process detailed in \hyperref[app:parabola]{Appendix C}. Note that while the time-dependence is removed, the parabolic Lambert arcs retain dependencies on the geometric parameter, $d_1$. Figures \ref{fig:tinf_short} and \ref{fig:tinf_long} present numerical experiments for short and long transfers, respectively, using the baseline scenario from Table \ref{table:sample_orbits_first}. In Figs. \ref{fig:tinf_short_m1m2.png} and \ref{fig:tinf_long_m1m2.png}, contour plots are generated with a representatively large time-of-flight value of $\tof = 25\cdot10^3$ seconds\footnote{For the baseline scenario, this value corresponds to approximately $2.5\,P_1$ (or $3.5\,P_2$), where $P_{1}$ and $P_{2}$ denote the orbital periods of the departure and arrival orbits. The specific choice is empirical; any comparably large value yields qualitatively identical contour structures, consistent with the asymptotic convergence toward the parabolic limit.} to approximate the behavior of $\tof \rightarrow \infty$. For comparison, Figs. \ref{fig:tinf_short_m1m2_analytic.png} and \ref{fig:tinf_long_m1m2_analytic.png} display the contours derived analytically using parabolic Lambert arcs via the process in \hyperref[app:parabola]{Appendix C}. The identified extremals are summarized in Table \ref{table:tinf}. \taken{A noteworthy observation is that, within this asymptotic limit, the short-transfer extremals are exclusively saddle points, whereas the long-transfer case additionally admits minima and a maximum (Table~\ref{table:tinf}). Whether this asymmetry is specific to the present geometry or reflects a more general property warrants further investigation.} In denoting each asymptotic case of the optimal Lambert families, the following notation is introduced,
\begin{align}
    \label{eq:asymptote_notation} \infty^{a}_{b,c},
\end{align}
where $a$ ($\in\left\{\mathrm{s}, \mathrm{l}\right\}$) denotes the short and long transfer scenarios. The optimality is classified in $b$ with $m, M, \sigma$ signifying the local minimum, maximum, and saddle, respectively. Finally, $c \geq 1$ is the index to enumerate multiple cases under the same category. Two sample Lambert arcs are illustrated in Fig. \ref{fig:tinf_lambert_arc}. Crucially, although a finite $\tof$ is selected for the numerical plots, the topological structures persist as they asymptotically approach the parabolic limit. Consequently, the extrema identified in this parabolic limit (Table \ref{table:tinf}) are not merely theoretical curiosities; they serve as robust initial seeds for the continuation process described in Section \ref{sec:cont_scheme}. By initializing at these asymptotic solutions, the optimal families can be systematically traced from the $\tof \rightarrow\infty$ regime back into the finite time-of-flight domain.

 \begin{figure}[htb!]
    \centering
    \begin{subfigure}[b]{0.48\textwidth}
        \centering
        \includegraphics[page=4, width=0.99\textwidth]{figures_everything.pdf}
        \caption{$\tof = 25\cdot10^{3}$ seconds.}
        \label{fig:tinf_short_m1m2.png}
    \end{subfigure}
    \begin{subfigure}[b]{0.48\textwidth}
        \centering
        \includegraphics[page=5, width=0.99\textwidth]{figures_everything.pdf}
        \caption{Parabolic Lambert arcs ($\tof \rightarrow \infty$).}
        \label{fig:tinf_short_m1m2_analytic.png}
    \end{subfigure}
  \caption{Comparison of cost landscapes in the angular domain ($d_1=\mathrm{s}$\taken{, short transfer}) as $\tof \rightarrow \infty$. Markers indicate stationary points classified by Hessian information (Table \ref{table:colormap}).}
  \label{fig:tinf_short}
\end{figure}

 \begin{figure}[hbt!]
    \centering
    \begin{subfigure}[b]{0.48\textwidth}
        \centering
        \includegraphics[page=6, width=0.99\textwidth]{figures_everything.pdf}
        \caption{$\tof = 25\cdot10^{3}$ seconds.}
        \label{fig:tinf_long_m1m2.png}
    \end{subfigure}
    \begin{subfigure}[b]{0.48\textwidth}
        \centering
        \includegraphics[page=7, width=0.99\textwidth]{figures_everything.pdf}
        \caption{Parabolic Lambert arcs ($\tof \rightarrow \infty$).}
        \label{fig:tinf_long_m1m2_analytic.png}
    \end{subfigure}
  \caption{Comparison of cost landscapes in the angular domain ($d_1=\mathrm{l}$\taken{, long transfer}) as $\tof \rightarrow \infty$. Markers indicate stationary points classified by Hessian information (Table \ref{table:colormap}).}
  \label{fig:tinf_long}
\end{figure}

\begin{table}[htpb]
\centering
\caption{\label{table:tinf}Asymptotic stationary points ($\tof \rightarrow \infty$) in the angular domain for the baseline scenario (Table \ref{table:sample_orbits_first})}
\begin{tabular}{p{2cm} c c c}
\toprule
Label & $\ma{1}$ [rad] & $\ma{2}$ [rad] & \taken{$J$ [km/s]}\\\midrule
$\infty^\mathrm{s}_{\sigma,1}$ & $1.63$  & $4.60$ & \taken{26.24}\\
$\infty^\mathrm{s}_{\sigma,2}$ & $5.74$ & $1.42$ & \taken{29.02}\\
$\infty^\mathrm{s}_{\sigma,3}$ & $2.44$ & $3.38$ & \taken{16.48}\\
$\infty^\mathrm{s}_{\sigma,4}$ & $5.98$ & $0.26$ & \taken{19.38}\\
$\infty^\mathrm{l}_{M,1}$ & $0.21$  & $6.05$ & \taken{33.80}\\
$\infty^\mathrm{l}_{\sigma,1}$ & $3.00$ & $2.30$ & \taken{29.53}\\
$\infty^\mathrm{l}_{\sigma,2}$ & $2.73$  & $6.05$ & \taken{10.32}\\
$\infty^\mathrm{l}_{\sigma,3}$ & $5.84$ & $2.84$ & \taken{10.25}\\
$\infty^\mathrm{l}_{m,1}$ & $0.85$ & $4.95$ & \taken{7.99}\\
$\infty^\mathrm{l}_{m,2}$ & $4.31$ & $1.53$ & \taken{8.95}\\\bottomrule
\end{tabular}
\end{table}

 \begin{figure}[hbt!]
    \centering
    \begin{subfigure}[b]{0.48\textwidth}
        \centering
        \includegraphics[page=8, width=0.99\textwidth]{figures_everything.pdf}
        \caption{A sample \acrshort{tiot} ($d_1=\mathrm{s}$\taken{, short transfer}) near $\infty^\mathrm{s}_{\sigma,2}$.}
        \label{fig:tinf_short_lambert_arc.png}
    \end{subfigure}
    \begin{subfigure}[b]{0.48\textwidth}
        \centering
        \includegraphics[page=9, width=0.99\textwidth]{figures_everything.pdf}
        \caption{A sample \acrshort{tiot} ($d_1=\mathrm{l}$\taken{, long transfer}) near $\infty^\mathrm{l}_{M,1}$.}
        \label{fig:tinf_long_lambert_arc.png}
    \end{subfigure}
  \caption{Representative \acrshort{tiot} geometries in the near-asymptotic regime ($\tof = 25\cdot 10^3$ seconds). \taken{In both panels, the dashed lines indicate the eccentricity-vector direction of each orbit.}}
  \label{fig:tinf_lambert_arc}
\end{figure}

\subsection{Temporal domain (\texorpdfstring{$\t-\tof$}{})}

The temporal domain formulation serves as the practical counterpart to the angular domain, directly mapping to the coordinates for standard porkchop plots and mission design. Selecting $x_1 = \t$, $x_2 = \tof$ for Eq. \eqref{eq:c} results in the optimality constraint as,
\begin{align}
    \label{eq:time_optimality} \left[ \frac{\partial J}{\partial x_1} , \frac{\partial J}{\partial x_2} \right] = \left[ \frac{\partial J}{\partial \t}, \frac{\partial J}{\partial \tof} \right] = \bm{0},
\end{align}
with each element derived via the chain rule as,
\begin{align}
    \label{eq:dJ_dt} & \frac{\partial J}{\partial \t} = \frac{\partial J}{\partial \ma{1}}\mm{1} + \frac{\partial J}{\partial \ma{2}}\mm{2}, \\
    \label{eq:dJ_dtof} & \frac{\partial J}{\partial \tof} = \frac{\partial J}{\partial \ma{2}}\mm{2} + \frac{\delta J }{\delta \tof},
\end{align}
with the relevant sensitivities supplied in \hyperref[app:gradient]{Appendix~A}. Then, the null space of the Jacobian in Eq. \eqref{eq:jacobian} points towards the direction that is perpendicular to the planes in Fig. \ref{fig:porkchop_2d_3d_3d.png} \taken{(depicted by the black arrow in Fig.~\ref{fig:porkchop_2d_3d_2d.png}; this direction is not the cost gradient but the one that connects the seemingly isolated optima along a continuous family curve)}. Following this direction enables the exploration of optimal families by effectively decoupling the search from specific epoch constraints, thereby navigating the underlying angular space of $\ma{1}$ and $
\ma{2}$. Similar to the angular domain, investigating the asymptotic behavior at $\t \rightarrow \infty$ provides critical insights for family initiation. A fundamental question arises: do the analytical structures identified in the angular domain (i.e., parabolic limits) persist in the temporal domain? \taken{This correspondence is not immediate, since the two domains enforce distinct stationarity conditions: a configuration that is stationary in the angular domain is, in general, not stationary in the temporal domain at finite $\tof$, owing to the explicit time-of-flight sensitivity in Eq.~\eqref{eq:dJ_dtof}. Establishing the asymptotic equivalence is therefore what justifies adopting the parabolic asymptotes as seeds for the temporal-domain continuation.} The following proposition establishes their equivalence, 
\begin{prop}
    At an asymptotic case $\tof \rightarrow \infty$, the optimality constraints in the $\t-\tof$ domain (Eqs. \eqref{eq:dJ_dt}-\eqref{eq:dJ_dtof}) approach the optimality constraints in the $\ma{1}-\ma{2}$ domain (Eq. \eqref{eq:angular_optimality}). 
\end{prop}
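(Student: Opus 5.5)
The approach is to write the temporal-domain constraints as a linear map of the angular-domain gradient plus a residual term, then show that the residual vanishes as $\tof\rightarrow\infty$. By Eqs.~\eqref{eq:dJ_dt}--\eqref{eq:dJ_dtof},
\begin{align*}
\mb \frac{\partial J}{\partial \t} \\ \frac{\partial J}{\partial \tof} \me = \mb \mm{1} & \mm{2} \\ 0 & \mm{2} \me \mb \frac{\partial J}{\partial \ma{1}} \\ \frac{\partial J}{\partial \ma{2}} \me + \mb 0 \\ \frac{\delta J}{\delta \tof} \me .
\end{align*}
The $2\times 2$ matrix has determinant $\mm{1}\mm{2}\neq 0$, since both orbits are elliptic with finite semi-major axes. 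Its entries are also constants independent of $\tof$. So it is a fixed isomorphism of $\mathbb{R}^2$. The whole proposition therefore reduces to one claim: the explicit time-of-flight sensitivity $\delta J/\delta \tof$, taken at fixed $\ma{1},\ma{2}$, tends to zero as $\tof\rightarrow\infty$.

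To prove that claim I would proceed in three steps.

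First, by Eq.~\eqref{eq:cost}, $J$ depends on $\tof$ at fixed angles only through the Lambert velocities $\vella{1},\vella{2}$. Hence
\[
\frac{\delta J}{\delta \tof} = \frac{\dv{1}}{\norm{\dv{1}}}\cdot\frac{\partial \vella{1}}{\partial \tof} - \frac{\dv{2}}{\norm{\dv{2}}}\cdot\frac{\partial \vella{2}}{\partial \tof}.
\]
This expression is valid away from zero impulses. Both unit vectors are bounded, so it suffices to show $\partial \vella{i}/\partial \tof \rightarrow \bm{0}$.

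Second, I would use a Lagrange-coefficient (or universal-variable) representation, $\vella{1} = (\pos{2} - f\pos{1})/g$. The positions $\pos{1},\pos{2}$ are fixed by the angles. For $N=0$ and fixed $d_1$, the limit $\tof\rightarrow\infty$ sends the transfer conic's energy to $0^-$, i.e.\ $1/a\rightarrow 0$, which is the parabolic limit of Eq.~\eqref{eq:la_m}. In Lambert's theorem, the time of flight is a smooth, monotone function $\tof(x)$ of the universal/Izzo variable $x$ on the elliptic branch, and $\tof\rightarrow\infty$ corresponds to $x\rightarrow x_{\mathrm{para}}$ from the elliptic side. The velocities are smooth in $x$ up to the parabolic value. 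This is what makes the parabolic construction of Appendix~C well defined and gives the limit $\vella{i}\rightarrow \bm{v}^{\la_{\mathrm{para}}}_i$.

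Third, I would apply the chain rule,
\[
\frac{\partial \vella{i}}{\partial \tof} = \frac{\partial \vella{i}/\partial x}{d\tof/dx},
\]
and show that $\partial \vella{i}/\partial x$ stays bounded while $|d\tof/dx|\rightarrow\infty$. Near the parabolic limit on the $N=0$ elliptic side, $\tof$ blows up like the period term $\propto a^{3/2}$, and $a\propto 1/(x_{\mathrm{para}}-x)$ to leading order. This gives $\tof \sim C(x_{\mathrm{para}}-x)^{-3/2}$, so $d\tof/dx \sim (x_{\mathrm{para}}-x)^{-5/2}$. Consequently $\partial\vella{i}/\partial\tof = O(\tof^{-5/3})$, which tends to zero. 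The angular-domain Hessian entries behave analogously, so the limiting stationary points are nondegenerate in both domains simultaneously.

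With $\delta J/\delta\tof\rightarrow 0$ established, the temporal constraints converge to the image of Eq.~\eqref{eq:angular_optimality} under the invertible constant matrix above. Their zero sets therefore coincide in the limit, and the angular stationary points of Table~\ref{table:tinf} are valid seeds in the temporal domain.

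The main obstacle is the second and third steps: justifying rigorously that the $N=0$ Lambert solution with fixed endpoints and fixed $d_1$ approaches the parabolic arc as $\tof\rightarrow\infty$, and obtaining the growth rate of $d\tof/dx$. I would first attempt this with Izzo's variable, for which $\tof(x)$ is explicit with a known singularity at $x=1$. A secondary caveat is the excluded degenerate cases: a vanishing impulse, or collinear $\pos{1},\pos{2}$ where the transfer plane is undefined. These would be set aside explicitly.
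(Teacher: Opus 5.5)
There is one concrete error in Steps 2--3. It is fixable, and with the fix your route is valid and different from the paper's.

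Your reduction is sound and is the paper's last step. The temporal constraints equal the constant matrix with rows $(\mm{1},\mm{2})$ and $(0,\mm{2})$ applied to the angular gradient, plus the residual $(0,\delta J/\delta\tof)$. You make the invertibility explicit, which the paper leaves implicit.

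\textbf{The error.} You place the $\tof\to\infty$ limit at the parabolic value of the Lambert variable, and that is wrong. In Izzo's variable (with $\lambda^2=1-c/s$), $x=1$ is \emph{not} that limit. There $T(1)=\tfrac{2}{3}(1-\lambda^3)$ and $T'(1)=\tfrac{2}{5}(\lambda^5-1)$ are both finite. So as $x\to1^-$ your quotient $(\partial\vella{i}/\partial x)/(d\tof/dx)$ tends to a generally nonzero value, and nothing vanishes.

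For $N=0$ the flight time diverges as $x\to-1^+$. In the Bate--Mueller--White universal variable this is $z\to4\pi^2$, not $z=0$. Both ends have $a=a_m/(1-x^2)\to\infty$, but they give the two different parabolas through $\pos{1},\pos{2}$, i.e., the two roots of Eq.~\eqref{eq:parabola_eq3}:
\begin{itemize}
\item $x\to1$ gives the direct parabola, with finite flight time;
\item the $\tof\to\infty$ limit is the parabola traversing $f=\pi$, which is the one Appendix~C selects.
\end{itemize}
As written, Step 2 identifies the wrong limiting arc, and the blow-up in Step 3 does not occur. Your own heuristic, that $\tof$ grows like the period term $a^{3/2}$, holds only at $x\to-1$. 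There Izzo's $\psi\to\pi$ and $T\approx\pi(1-x^2)^{-3/2}$. At $x\to1$ the $a^{3/2}$ contributions cancel and leave the finite parabolic time.

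\textbf{The fix and comparison with the paper.} With the endpoint corrected, your argument goes through:
\begin{itemize}
\item near $x=-1$, $y=\sqrt{1-\lambda^2(1-x^2)}\to1$ and Izzo's velocity formulas are smooth in $x$;
\item $1-x^2\approx2(1+x)$, so $\tof\propto(1+x)^{-3/2}$ and $d\tof/dx\propto(1+x)^{-5/2}$;
\item hence $\partial\vella{i}/\partial\tof=O(\tof^{-5/3})$.
\end{itemize}

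This is a genuinely different route. The paper takes the SVD of $\bm{\phi}_{\bm{rv}}$ and uses its rank-one dominance $\sigma_1\bm{U}_1\bm{V}_1^\transpose$, with $\bm{U}_1=-\vella{2}/\norm{\vella{2}}$ and $\bm{V}_1=\vella{1}/\norm{\vella{1}}$. That gives $\delta\vella{1}/\delta\tof\sim\norm{\vella{2}}\bm{V}_1/\sigma_1\to\bm{0}$, and a reverse-time STM handles $\vella{2}$.

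\begin{itemize}
\item What the paper's route buys: it ties the result to the STM blocks already used in the numerics, and to the physical picture of phase drift from semi-major-axis variation.
\item What your route buys: it is more elementary and more rigorous, and it treats both velocities at once. It also does not need $\vella{2}$ to be exactly aligned with $\bm{U}_1$. The paper uses that alignment tacitly, since $\bm{\phi}_{\bm{rv}}^{-1}$ weights any misalignment by $1/\sigma_{2,3}=O(1)$.
\item The two agree quantitatively. With $a\propto\tof^{2/3}$, the paper's $\sigma_1=3a\norm{\vella{1}}\norm{\vella{2}}\tof/\mu$ grows like $\tof^{5/3}$, which gives your rate.
\end{itemize}

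\textbf{A smaller point.} Nondegeneracy of the limiting stationary points does not follow from the Hessian entries ``behaving analogously.'' It is an extra hypothesis on the parabolic cost. You need it, together with $C^1$ convergence in $\ma{1},\ma{2}$, if you want the zero sets to converge and not just the equations.
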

\begin{proof}
    Refer to \hyperref[app:proof]{Appendix D}.    
\end{proof}
\noindent Consequently, the optimal parabolic transfers identified in the angular domain serve as robust analytical seeds for the temporal domain as well, allowing for the systematic initiation of optimal families from the asymptotic regime.

\section{\label{sec:framework}\taken{Framework for 
Exploring Families of \texorpdfstring{\acrshort{tiots}}{}}}
This section formalizes a computational framework for locating families of \acrshort{tiots}, building directly on the analytical developments introduced in the preceding sections. The framework consists of three main components: (1) seed initialization, (2) family continuation, and (3) solution analysis. The adoption of this framework enables a systematic exploration of multiple branches of locally optimal transfer solutions between arbitrary departure and arrival orbits. An example transfer scenario, summarized in Table \ref{table:sample_orbits_first}, is used throughout this section to illustrate the proposed framework for a representative general rendezvous problem. Among the two optimality domains introduced in Section \ref{sec:domain}, the temporal domain is the primary focus of the remainder of this work. The framework remains fully applicable to the angular domain; corresponding results are provided in \hyperref[app:angular_domain_family]{Appendix E}.

\taken{All numerical routines in the proposed framework are implemented in a Python environment. Computationally intensive routines that are repeatedly called during the continuation process are accelerated utilizing the just-in-time compiler package Numba\footnote{\taken{\url{https://numba.pydata.org/}}}. In particular, the Lambert solver is developed by adapting the MATLAB routine ``Robust solver for Lambert's orbital-boundary value problem''\footnote{\taken{\url{https://github.com/rodyo/FEX-Lambert}}} into a modified Python implementation compatible with Numba-based compilation. The solver primarily follows Izzo's algorithm~\cite{izzo2015revisiting}; when this solver fails to converge, a robust fallback adapted from the Lancaster--Blanchard--Gooding-type implementation in the original MATLAB routine is invoked~\cite{lancaster1970solution,gooding1990procedure}. Root finding and nonlinear least-squares calculations are performed utilizing routines from the SciPy package\footnote{\taken{\url{https://scipy.org/}}}. Table~\ref{table:numerical_tolerance} summarizes the numerical algorithms and procedures utilized in the framework, together with the corresponding tolerance values.}

\begin{table}[htpb]
\centering
\caption{\label{table:numerical_tolerance}\taken{Numerical algorithms, procedures, and tolerance settings utilized in the framework}}
\taken{\begin{tabular}{p{6.2cm} p{7.0cm}}
\toprule
Algorithm / Procedure & Tolerance / Acceptance criterion \\\midrule
Lambert solver (Izzo~\cite{izzo2015revisiting})
& Internal root iteration tolerance: $10^{-14}$ \\
Lambert fallback solver (Lancaster~\cite{lancaster1970solution})
& Internal root iteration tolerance: $10^{-12}$ \\
Asymptotic parabola stationary-point search 
& Root solver tolerance: $x_\mathrm{tol}=10^{-12}$; stationary points are accepted when $\|\nabla J\|\leq 10^{-6}$, with fallback cases relaxed up to $10^{-4}$ \\
Parabola seed correction 
& Least-squares tolerances: $f_\mathrm{tol}=x_\mathrm{tol}=g_\mathrm{tol}=10^{-12}$ \\
Porkchop seed generation 
& Least-squares tolerances: $f_\mathrm{tol}=x_\mathrm{tol}=g_\mathrm{tol}=10^{-12}$; candidate seeds are accepted when $\|\mathbf{r}\|<10^{-6}$ \\
Family continuation correction 
& Least-squares tolerances: $f_\mathrm{tol}=x_\mathrm{tol}=10^{-10}$; corrected family points are accepted when $\|\mathbf{r}\|\leq 10^{-4}$ \\
\bottomrule
\end{tabular}}

\vspace{2pt}
{\footnotesize\taken{\parbox{13.2cm}{Symbols $f_\mathrm{tol}$, $x_\mathrm{tol}$, and $g_\mathrm{tol}$ denote the termination tolerances on the cost-function reduction, the step in the independent variables, and the gradient norm, respectively, following the corresponding SciPy solver conventions. The acceptance criteria leverage $\|\nabla J\|$, the norm of the cost gradient, and $\|\mathbf{r}\|$, the norm of the residual of the solved system.}}}
\end{table}

\begin{figure}[htbp]
\centering
\includegraphics[page=10, width=0.6\textwidth]{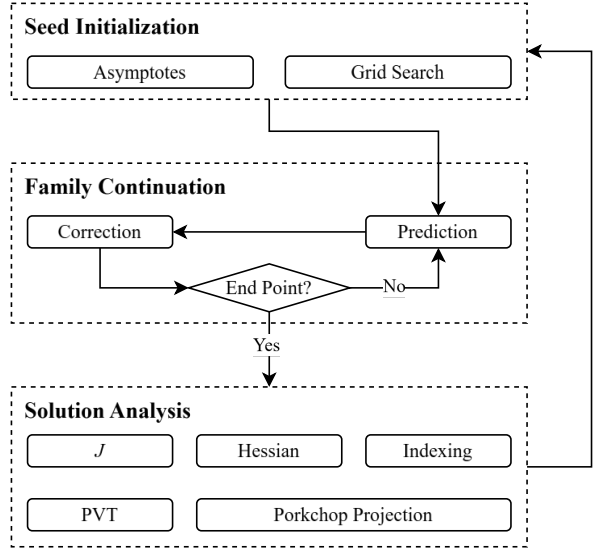}
\caption{Conceptual framework for constructing \acrshort{tiot} families.}
\label{fig:framework_overview}
\end{figure}

\subsection{Seed Initialization}
To initiate the continuation process described in Section \ref{sec:cont_scheme}, a set of starting seeds is required. Two complementary approaches are adopted in the present framework: (1) semi-analytic seeds derived from the asymptotic behavior as the time-of-flight approaches infinity, and (2) numerical seeds obtained via grid search. Both strategies are examined concurrently in this investigation. Unlike many previous studies that focus exclusively on minimum-fuel solutions, the present analysis considers all stationary transfers, regardless of their Hessian classification (Section \ref{sec:hessian}). This choice is deliberate: along a given family, second-order optimality properties may change, and limiting attention to minima alone may obscure global connectivity between solution branches. By tracking all stationary points and their evolving families, the framework enables a more complete characterization of the global structure of \acrshort{tiots}.

\subsubsection{Seeds from \texorpdfstring{$\tof \rightarrow \infty$}{} Asymptotes}
The asymptotic behaviors discussed in the preceding section provide a natural source of initial seeds. This approach is semi-analytic, computationally inexpensive, and can be applied without prior knowledge of the detailed structure of a given family. Its applicability, however, is limited to families that originate from or terminate at the asymptotic limit $\tof \rightarrow \infty$. Families that exhibit closed-loop behavior or remain entirely within finite time-of-flight ranges are not captured by this method.

The asymptotes summarized in Table \ref{table:tinf} correspond to parabolic Lambert limits. To generate seeds suitable for continuation at finite time-of-flight, the asymptotic angular configurations are re-converged in the selected optimality domain utilizing a large but finite transfer time\taken{, chosen to be a few orbital periods (Section~\ref{sec:domain})}. The resulting solutions serve as initial seeds for family continuation in the finite-time regime. For the baseline transfer scenario considered in this study, these seeds are located near $\tof = 25\cdot 10^3$ seconds and are listed in Table \ref{table:seeds_from_parabola}, with labels consistent with their corresponding asymptotic classifications.

\begin{table}[htpb]
\centering
\caption{\label{table:seeds_from_parabola}Re-converged numerical seeds derived from asymptotic stationary points ($t \rightarrow \infty$, Table \ref{table:tinf}) for family continuation. The seeds are obtained at a representative large time-of-flight ($\tof \approx 25 \cdot 10^3$ seconds) within the temporal domain.}
\begin{tabular}{ l c c c c}
\toprule
Root asymptote & $\ma{1}$ [rad] & $\ma{2}$ [rad] & $t$ [$10^3$ sec] & \taken{$J$ [km/s]}\\\midrule
$\infty^\mathrm{s}_{\sigma,1}$ & $1.61$  & $4.64$ & $25.0$ & \taken{25.10}\\
$\infty^\mathrm{s}_{\sigma,2}$ & $5.69$ & $1.39$ & $25.1$ & \taken{27.67}\\
$\infty^\mathrm{s}_{\sigma,3}$ & $2.46$ & $3.32$ & $25.1$ & \taken{13.42}\\
$\infty^\mathrm{s}_{\sigma,4}$ & $5.98$ & $0.24$ & $25.0$ & \taken{16.58}\\
$\infty^\mathrm{l}_{M,1}$ & $0.13$  & $6.12$ & $25.2$ & \taken{31.73}\\
$\infty^\mathrm{l}_{\sigma,1}$ & $3.18$ & $2.52$ & $25.4$ & \taken{27.21}\\
$\infty^\mathrm{l}_{\sigma,2}$ & $2.66$  & $5.87$ & $25.3$ & \taken{8.01}\\
$\infty^\mathrm{l}_{\sigma,3}$ & $5.78$ & $2.62$ & $25.3$ & \taken{7.82}\\
$\infty^\mathrm{l}_{m,1}$ & $0.82$ & $4.78$ & $25.2$ & \taken{6.03}\\
$\infty^\mathrm{l}_{m,2}$ & $4.21$ & $1.26$ & $25.4$ & \taken{6.75}\\\bottomrule
\end{tabular}
\end{table}

\subsubsection{\label{sec:seeds_grid}Seeds from Grid Search}
A grid search-based numerical approach offers a universal mechanism for seed generation, without restrictions on the types of admissible families. Although more computationally intensive than the asymptotic approach, its efficiency improves significantly when approximate regions of interest in the space are informed by prior knowledge of the optimality structure. The grid search is conducted on planar sections aligned with the selected optimality domains, as illustrated in Fig. \ref{fig:grid_search_explained}. In the $\ma{1}-\ma{2}$ domain, the search planes are parallel to the $\ma{1}-\ma{2}$ plane (i.e., $\tof = c$ with a constant $c$, Fig. \ref{fig:grid_search_explained_angle}). In the $\t-\tof$ domain, the search planes are aligned with progressions in $\tof$, expressed as $\ma{1}/n_1 - \mad{2}/n_2 = c$ (Fig. \ref{fig:grid_search_explained_time}). 
Stationary points on these planes are identified using a numerical solver that enforces the first-order optimality conditions in the respective domains. In particular, for the temporal ($\t-\tof$) domain, this procedure is analogous to a porkchop-based search for local minima; however, the objective here is to identify stationary points, forming a superset that includes minima, maxima, and saddle points. For the representative example depicted in Fig. \ref{fig:seeds_porkchop}, the porkchop plots are constructed over the ranges $0 \leq \t \leq 2P_{syn}$ and $ 0.2P_2 \leq \tof \leq 2P_2$, where $P_{syn} = 2\pi/(\mm{2}-\mm{1})$ and $P_2 = 2\pi/\mm{2}$. These bounds are selected to capture the dominant structures of the cost landscape for the illustrative case and do not represent a limitation of the proposed framework. In Fig. \ref{fig:seeds_porkchop}, the detected stationary points are marked according to the color scheme defined in Table \ref{table:colormap} and serve as seeds for the subsequent continuation process.

 \begin{figure}[htbp]
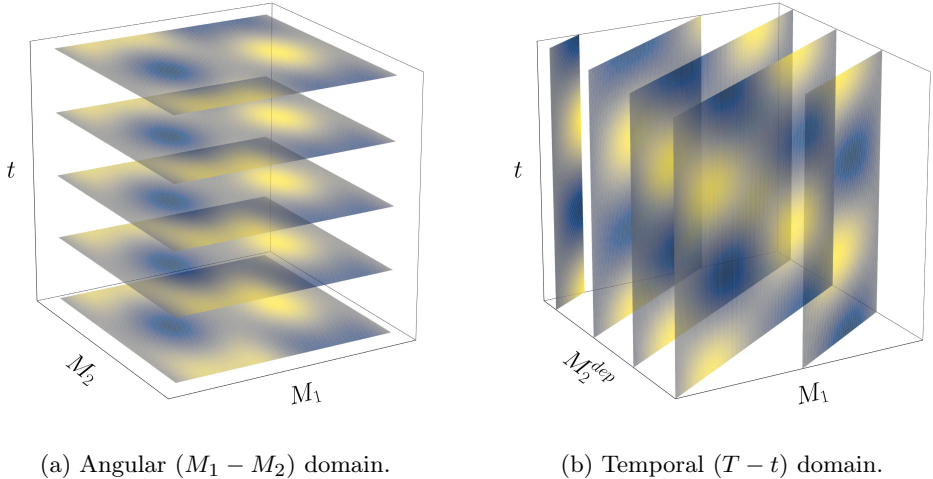

  \centering
  \begin{subfigure}[b]{0.49\textwidth}
    \centering
    \includegraphics[page=11, width=0.99\textwidth]{figures_everything.pdf}
    \caption{\label{fig:grid_search_explained_angle}Angular ($\ma{1}-\ma{2}$) domain.}
  \end{subfigure}
  \hfill
  \begin{subfigure}[b]{0.49\textwidth}
    \centering
    \includegraphics[page=12, width=0.99\textwidth]{figures_everything.pdf}
    \caption{\label{fig:grid_search_explained_time}Temporal ($\t-\tof$) domain.}
  \end{subfigure}
  \caption{Visualization of grid search strategies within the two optimality domains.}
  \label{fig:grid_search_explained}
\end{figure}

\begin{figure}[htbp]
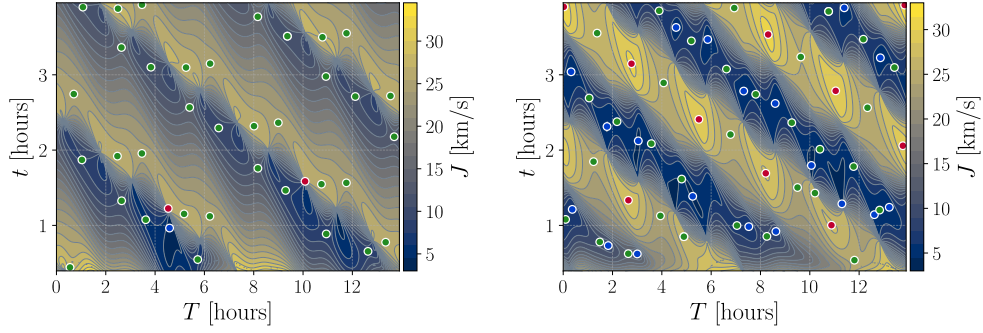

  \centering
  \begin{subfigure}[b]{0.49\textwidth}
    \centering
    \includegraphics[page=13, width=0.99\textwidth]{figures_everything.pdf}    \caption{\label{fig:seeds_porkchop_short}$d_1 = \mathrm{s}$ \taken{(short transfer)}: 1 minimum, 2 maxima, and 37 saddle points.}
  \end{subfigure}
  \hfill
  \begin{subfigure}[b]{0.49\textwidth}
    \centering
    \includegraphics[page=14, width=0.99\textwidth]{figures_everything.pdf}
    \caption{\label{fig:seeds_porkchop_long}$d_1 = \mathrm{l}$ \taken{(long transfer)}: 19 minima, 10 maxima, and 34 saddle points.}
  \end{subfigure}
  \caption{Identification of stationary points via grid search in the temporal domain.}
  \label{fig:seeds_porkchop}
\end{figure}

\subsubsection{Other Seed Sources}

In addition to asymptotic limits and grid-based searches, seeds may also be supplied using prior knowledge or diagnostic information obtained during the continuation process. For example, after completing continuation from all initially identified seeds, the absence of an expected connection between families may indicate missing branches. In such cases, additional seeds can be introduced to recover the complete family structure. \taken{An example of this situation occurs in the angular domain analysis, detailed in Appendix~E.}

\taken{It is emphasized, however, that this framework does not guarantee analytical completeness of the identified family set in either optimality domain. The difficulty is more pronounced in the temporal domain, where intricate connections that do not involve the asymptotic ($\tof\to0,\infty$) limits are more common (for instance, the cyclic families that connect to no asymptote), so the seeding cannot rely on the asymptotic configurations alone. Nevertheless, for non-resonant orbital configurations the time-lines become dense in the angular domain (Section~\ref{sec:porkchop_projection}), so a sufficiently long and fine epoch sampling intersects any family of finite angular extent; once a single point on a family is seeded, continuation recovers the entire branch, including narrow segments of interest. The residual risk is thus confined to families whose entire extent is vanishingly small, which are difficult to detect yet also of limited practical value, as their scarce recurrence is quantified by the perpendicular family extent in Section~\ref{sec:long_horizon}.}

\subsection{Family Continuation and End Points}
Starting from the identified seeds, families of \acrshort{tiots} are continued utilizing the numerical continuation scheme described in Section \ref{sec:cont_scheme}. In this framework, one-parameter families are traced by enforcing two optimality conditions while relaxing the remaining degree of freedom. From a given seed, continuation generally proceeds in two directions along the family. For convenience, any component of the state vector $\bm{X}$ may be used to define the initial continuation direction; in the present study, the $\pm \tof$ directions are adopted. A suitable step size along the family is required to avoid unintended jumps between neighboring branches. Although the step size $ds$, from Eq. \eqref{eq:guess}, is a user-defined parameter that may vary across applications, a value of $ds = 0.01$ is adopted for the case studies presented here\footnote{Since $ds$ measures arc length in a space of mixed units (radians for $\ma{1,2}$ and a nondimensional time for $\tof/1000$), it does not carry a single physical unit; it serves purely as a scaling parameter that controls the step size within the free-variable ($\bm{X}$) space.}, with the variables scaled as $(\ma{1}, \ma{2}, \tof/1000)$. From each seed, family members are generated sequentially in both directions until one of the following termination conditions (``end points'') is encountered:
\begin{enumerate}
    \item Upper time-of-flight bound ($\tof > \tof_{max}$): Continuation is terminated when the time-of-flight exceeds a prescribed maximum threshold. In the present analysis, this value is set to $\tof_{max} = 40\cdot10^3$ seconds. Beyond this range, the transfer trajectories gradually approach the parabolic Lambert solutions associated with the asymptotic limit $\tof \rightarrow \infty$, and further continuation provides limited additional insight.
    \item Lower time-of-flight bound ($\tof < \tof_{min}$): Continuation is also terminated when the time-of-flight drops below a minimum threshold, set to $\tof_{min} = 10$ seconds in this study. For shorter transfer times, the Lambert solutions typically become hyperbolic and exhibit increased sensitivity that destabilizes the continuation process and compromises numerical robustness. 
    \item Singularity at $\theta = 180^\circ$: A well-known singularity arises in the Lambert problem when the transfer angle satisfies $\theta = 180^\circ$. In this configuration, the orbital plane of the transfer arc becomes ambiguous, and the STM submatrix $\phi_{\bm{rv}}$, required to compute the sensitivities of the Lambert velocities with respect to the endpoint positions, becomes singular. For the reference case from orbits in Table \ref{table:sample_orbits_first}, this singular configuration is illustrated in Fig. \ref{fig:transfer_pi.png}, where the red line denotes the nodal line that intersects both orbital planes. At $\theta = 180^\circ = \pi$ radians, two distinct geometric configurations are possible (Fig. \ref{fig:transfer_pi.png}): (1) $\redfilledcircle\rightarrow \redfilledsquare$, and (2) $\redfilledtriangle \rightarrow \redfilledstar$. These configurations are denoted as $\pi_1$ and $\pi_2$, respectively. In the temporal domain, two distinct empirical behaviors are observed in the vicinity of this singularity for the cases examined in this study. In some instances, two family branches appear to connect smoothly across $\theta = 180^\circ$. This singular configuration is associated with the transition between long- and short-transfer Lambert arcs. Several strategies may be adopted to handle this behavior within a continuation framework. Purely numerical approaches may increase the continuation step size to effectively ``jump'' across the singularity while switching the transfer-type indicator $d_1$ (long or short). More analytical or geometric treatments are also possible. For example, Sun \cite{sun1969analytic} demonstrated that optimal transfer-plane orientations across $\theta = 180^\circ$ satisfy a Snell’s-law-like condition, a result later reproduced by Vinh et al. \cite{vinh1988optimal}. This suggests that the optimality constraints adopted here may transform into alternative geometric conditions at the singular configuration. In other instances, two distinct branches emerge that approach the singular configuration asymptotically as $\tof \rightarrow 0$, without exhibiting a smooth connection. A detailed analytical treatment of these behaviors near singularity, including a rigorous classification of the observed behaviors and their underlying causes, is beyond the scope of the present study. Nevertheless, empirical evidence indicates that smooth family continuation across $\theta = 180^\circ$ is possible in certain cases. Accordingly, the present work adopts a simple and robust strategy: continuation is halted as the solution approaches the singularity, and family connectivity across $\theta = 180^\circ$ is subsequently assessed \emph{a posteriori} using porkchop-based seeds on the opposite side of the singularity. If a branch is found to be missing, an additional seed is supplied from the porkchop plot to recover the full family.
    \begin{figure}
        \centering
        \includegraphics[page=72, width=0.5\textwidth]{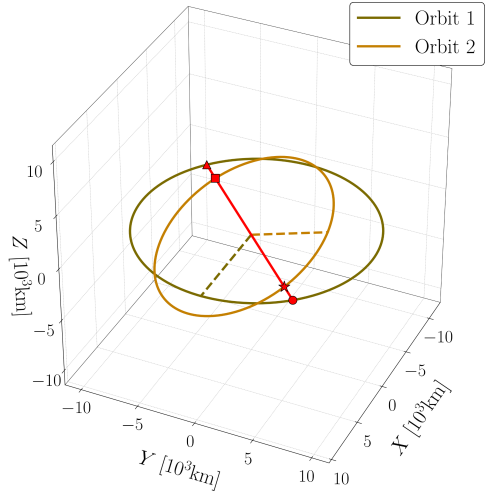}
        \caption{Geometric configuration of the common nodal line (red): intersection of the departure and arrival orbital planes.}
        \label{fig:transfer_pi.png}
    \end{figure}    
    \item Completed cycle: For families that exhibit cyclic behavior, continuation is terminated when the current family member returns to the original seed. In the present investigation, such cyclic families are observed only in the temporal domain; the angular domain formulation does not yield closed-loop behavior for the cases examined. \taken{In the angular domain, every identified family instead terminates at an asymptotic ($\tof\to0$ or $\infty$) or singular ($\theta=180^\circ$) limit (Appendix~\ref{app:angular_domain_family}); a broader characterization across orbital geometries remains a topic for future work.}
\end{enumerate}
For the examples presented in this work, these four termination conditions encompass all observed family end points. Depending on the orbital geometry, additional behaviors may arise that violate the implicit assumptions of the current continuation scheme. For example, the Jacobian $\partial \c / \partial \fv$ in Eq. \eqref{eq:dc_dfv} during the differential correction process may, in principle, admit a two-dimensional null space, indicating the intersection of multiple families at a single point. Although no such bifurcating configurations are observed in the present study, their potential existence motivates further investigation.

\taken{After all continuations from the generated seeds are completed, duplicate continuations are identified by treating each continuation as a curve in $(\ma{1},\ma{2},\tof)$ space and comparing these curves directly, with wrapped distances applied to the angular coordinates. For each pair of continuations, nearest-neighbor matching is performed in both directions: a point on one curve is regarded as covered when the nearest point on the other lies within the prescribed tolerances, taken here as $3.0\times10^{-2}$~rad in each angular coordinate and $3.0\times10^{-2}$ in the normalized time coordinate. Two continuations are regarded as duplicates of the same family when at least 95\% of the points on each curve are covered in this sense; equivalently, the 95th percentile of the nearest-neighbor discrepancies falls within these tolerances, tolerating mismatch over the remaining few percent typically near the curve endpoints. This bidirectional coverage requirement separates a genuine merger, with global overlap of the two curves, from a mere close approach, with only local intersection or proximity. When multiple continuations are grouped as duplicates, a single representative curve is retained, preferring one whose continuation terminated cleanly at both ends and, failing that, the one with more sampled points. These thresholds proved adequate for the configurations examined; more intricate family evolution, such as branches that remain close over an extended arc, may call for finer tolerances or a supplementary manual check, and other curve-matching or clustering criteria could equally be used, provided they distinguish global curve overlap from local close approaches.}

For the sample transfer scenario summarized in Table \ref{table:sample_orbits_first}, families are generated from both asymptotic and grid search seeds. The resulting families are visualized in the three-dimensional $\ma{1}-\ma{2}-\tof$ space and are plotted in \taken{Fig. \ref{fig:seeds_family_combine_3d}} for the asymptotic and grid search seeds, respectively. The families supplied from the grid search seeds fully encompass those generated from the asymptotic seeds. In contrast, cyclic families do not connect to the $\tof \rightarrow \infty$ asymptotes and therefore cannot be discovered leveraging asymptotic initialization alone. \taken{The two seeding strategies therefore play complementary roles: the grid search provides exhaustive numerical coverage, in the present cases yielding a superset of the asymptotically seeded families, whereas the asymptotic approach is parameter-free, requiring no choice of search domain or resolution, and analytically identifies the $\tof \rightarrow \infty$ origins of the families. These analytic origins, and their qualitative changes under variations in orbital geometry, underlie the reconnection and reorganization of families noted in Section~\ref{sec:conclusions_future_work}, structural information that is less apparent from purely numerical grid searches.} In \taken{Fig. \ref{fig:seeds_family_combine_3d}}, two black dotted lines correspond to $\pi_{1,2}$, singular configurations defined from Fig. \ref{fig:transfer_pi.png}. 



\begin{figure}[htbp]
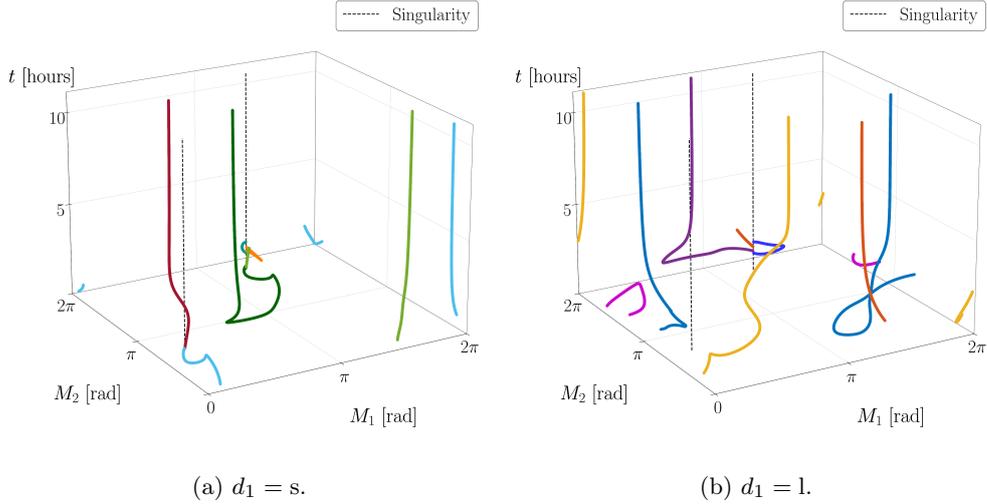

  \centering
  \begin{subfigure}[b]{0.49\textwidth}
    \centering
    \includegraphics[page=15, width=0.99\textwidth]{figures_everything.pdf}
    \caption{\label{fig:parabola_seeds_family}.}
  \end{subfigure}
  \hfill
  \begin{subfigure}[b]{0.49\textwidth}
    \centering
    \includegraphics[page=16, width=0.99\textwidth]{figures_everything.pdf}
    \caption{\label{fig:porkchop_seeds_family}.}
  \end{subfigure}
  \caption{\taken{Global structure of \acrshort{tiot} families initiated from asymptotes (Figs. \ref{fig:tinf_short} and \ref{fig:tinf_long}) and grid search seeds (Fig. \ref{fig:seeds_porkchop}).}}
  \label{fig:seeds_family_combine_3d}
\end{figure}

\subsection{Solution Analysis}
The families recorded during seed initialization and continuation are further analyzed to extract qualitative and quantitative insights into the structure of \acrshort{tiot} families. From this section onward, the analysis focuses on families generated from grid search seeds, as they supply the more comprehensive coverage of the solution space.

\taken{The analysis consists of several modules, as depicted in Fig.~\ref{fig:framework_overview}. These modules should not be interpreted as a single prescribed procedure or algorithm in which every step must always be executed. Rather, they are independent analysis components that can be selected and applied depending on the objective of the study. For example, if the problem is limited to a simple two-impulse mission design and only the required $\Delta V$ is of interest, PVT information is unnecessary. Conversely, such diagnostic information becomes useful when the objective is to classify stationary solutions or examine family connectivity. Thus, the ``Solution Analysis'' block in Fig.~\ref{fig:framework_overview} represents a modular set of optional analyses rather than a mandatory sequential workflow.}

\subsubsection{Total Cost \texorpdfstring{$J$}{}}

The total transfer cost $J$ is a primary quantity of interest. \taken{Figure \ref{fig:orbit1_porkchop_family_with_dv}} displays the families generated from grid search seeds, with the color indicating the magnitude of $J$. Lower-cost transfers naturally cluster along specific segments of the families, revealing geometries that are potentially attractive from a mission design perspective. To enhance visual discrimination among low-cost solutions, the color scale is \taken{plotted in log scale. This adjusted scale highlights the most practically relevant subset of the solution space and clarifies the relative desirability of neighboring family branches.}



\begin{figure}[htbp]
\centering
\includegraphics[page=17, width=0.6\textwidth]{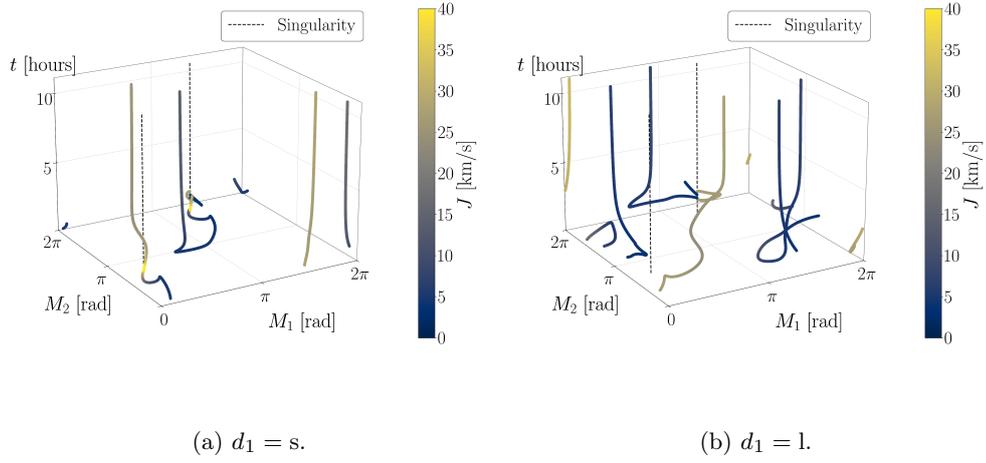}
\caption{\taken{Mapping of transfer cost $J$ along the identified solution families.}}
\label{fig:orbit1_porkchop_family_with_dv}
\end{figure}

\subsubsection{Hessian}
Although the continuation framework enforces first-order optimality conditions, the second-order optimality characteristics may vary along a given family. Accordingly, Hessian information is evaluated for all family members to distinguish between local minima, maxima, and saddle points. \taken{Figure \ref{fig:porkchop_seeds_family_hess}} illustrates the families colored according to their Hessian classification (Table \ref{table:colormap}). A notable observation is that the optimality type may change continuously along a single family, indicating transitions between minima, saddles, and maxima without discontinuities in the underlying transfer geometry. This behavior underscores the importance of treating families as continuous objects rather than collections of isolated optimal points.


\begin{figure}[htbp]
  \centering
  \begin{subfigure}[b]{0.49\textwidth}
    \centering
    \includegraphics[page=18, width=0.99\textwidth]{figures_everything.pdf}
    \caption{\label{fig:porkchop_seeds_family_hess}\taken{Evolutionary transition of optimality types along the \acrshort{tiot} families. The colors represent the Hessian classification defined in Table \ref{table:colormap} (blue: minimum, red: maximum, green: saddle point).}}
  \end{subfigure}
  \hfill
  \begin{subfigure}[b]{0.49\textwidth}
    \centering
    \includegraphics[page=19, width=0.99\textwidth]{figures_everything.pdf}
    \caption{\label{fig:porkchop_seeds_family_pvt}\taken{Validation of impulsive optimality utilizing the \acrshort{pvt} criteria along the solution families. The color scheme follows Table \ref{table:family_index}, while segments that violate \acrshort{pvt} necessary conditions are marked in gray}.}
  \end{subfigure}
  \caption{\taken{Family curves with Hessian and \acrshort{pvt} information.}}
  \label{fig:porkchop_seeds_family_hess_pvt}
\end{figure}

\subsubsection{Assigning Family Index}

Since different seeds may converge to different segments of the same continuous family, or, conversely, to distinct families that appear or disappear across the domain, a consistent family indexing scheme is required. Assigning indices to entire families enables a systematic characterization of the global solution structure, revealing the connections, evolutions, and terminations of families. This indexing facilitates consistent reference across figures and analyses and supports comparative studies of cost trends, optimality transitions, and geometric behavior, particularly in the presence of singular configurations where families may merge, split, or terminate. After removing duplicate segments obtained from different seeds, each unique family is assigned a distinct index. Figure \ref{fig:family_connection1} illustrates the 12 indexed families identified for the representative case, while Table \ref{table:family_index} summarizes their end points and observed inter-family connections. The asymptotic end points at $\tof \rightarrow \infty$ are categorized following the labels from Table \ref{table:seeds_from_parabola} and Eq. \eqref{eq:asymptote_notation}. Singular end points $\pi_1$ and $\pi_{2}$ correspond to the configurations $\redfilledcircle\rightarrow \redfilledsquare$ and $\redfilledtriangle \rightarrow \redfilledstar$ in Fig. \ref{fig:transfer_pi.png}, respectively. At the singular end points, families may either connect smoothly across the singularity or terminate asymptotically, depending on the local family geometry. A zoomed-in view in Fig. \ref{fig:family_connection1} highlights several examples where apparently distinct families merge into a unified structure when examined in the full parameter space (e.g., Families 2, 4, and 9, as well as Families 10 and 12). In contrast, other branches originate from the same singularity but do not connect in a smooth fashion (e.g., Families 6 and 7, as well as Families 5 and 8). \taken{In these instances, the two families approach the singular configuration through different limiting geometries, so that the corresponding transfers remain geometrically distinct even where the families pass close to one another in the parameter space.} \taken{Because continuation terminates before reaching these singular configurations, such connections are assessed a posteriori rather than traced directly. A connection requires a short-transfer branch on one side and a long-transfer branch on the other, and is inferred from the similarity of the limiting transfer geometries and the smoothness of the solution curve across the singularity; in some cases it can be confirmed by bridging the singular point with an enlarged continuation step.} While a rigorous analytical characterization of these singular connections is beyond the scope of the present study \taken{and is left for future work}, the proposed indexing scheme provides a practical and unambiguous means of documenting the observed family connectivity and organizing the global solution structure.

\begin{table}[htpb]
\centering
\caption{\label{table:family_index}Summary of indexed \acrshort{tiot} families in the temporal domain for the baseline scenario (Table \ref{table:sample_orbits_first})}
\begin{tabular}{l c c c c }
\toprule
Family No. & End point 1 & End point 2 & Connections (if any) & Color \\\midrule
1  & $\infty^\mathrm{l}_{m,1}$      & $\infty^\mathrm{l}_{\sigma,3}$     & -                           & \linefamsolid{colFamOne} \\
2  & $\infty^\mathrm{l}_{m,2}$ & $\pi_2$               & Family 9 & \linefamsolid{colFamTwo} \\
3  & $\infty^\mathrm{l}_{M,1}$& $\infty^\mathrm{l}_{\sigma,1}$          & -                                         & \linefamsolid{colFamThree} \\
4  & $\infty^\mathrm{l}_{\sigma,2}$& $\pi_2$               & Family 9                            & \linefamsolid{colFamFour} \\
5  & $\infty^\mathrm{s}_{\sigma,2}$& $\pi_2$   & -         & \linefamsolid{colFamFive} \\
6  & $\infty^\mathrm{s}_{\sigma,4}$& $\pi_1$   & -         & \linefamsolid{colFamSix} \\
7  & $\infty^\mathrm{s}_{\sigma,1}$& $\pi_1$   & -         & \linefamsolid{colFamSeven} \\
8  & $\infty^\mathrm{s}_{\sigma,3}$& $\pi_2$   & -         & \linefamsolid{colFamEight} \\
9  & $\pi_2$     & $\pi_2$               & Family 2, 4                                         & \linefamsolid{colFamNine} \\
10 & $\pi_2$          & $\pi_2$         & Family 12                                        & \linefamsolid{colFamTen} \\
11 & -          & -          & -                                         & \linefamsolid{colFamEleven} \\
12 & $\pi_2$          & $\pi_2$       & Family 10                                         & \linefamsolid{colFamTwelve} \\\bottomrule
\end{tabular}
\end{table}

\begin{figure}[h!]
    \centering
    \includegraphics[page=20, width=0.99\textwidth]{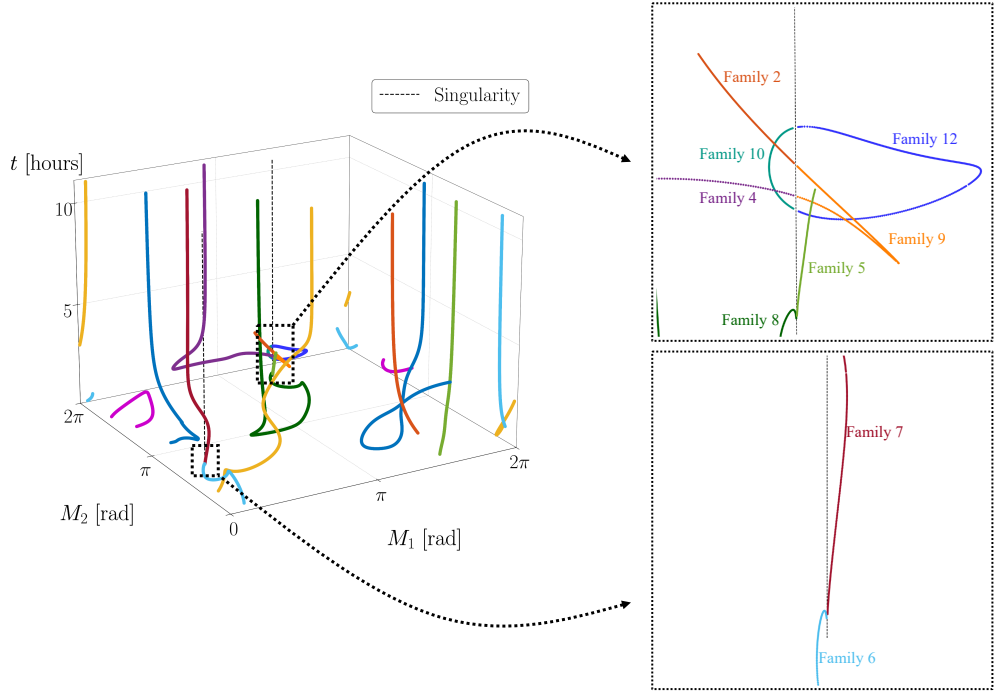}
    \caption{Global structure of \acrshort{tiot} families in the temporal domain colored by their indices (Table \ref{table:family_index}).}
    \label{fig:family_connection1}
\end{figure}

\subsubsection{\texorpdfstring{\acrshort{pvt}}{} (Specific to the \texorpdfstring{$\t-\tof$}{} Domain)}

Families generated by enforcing the temporal domain optimality constraints do not necessarily satisfy the necessary conditions from the \acrshort{pvt}; as such, each \acrshort{tiot} is evaluated for the \acrshort{pvt} conditions. \taken{The identified two-impulse solutions are locally optimal within the class of two-impulse transfers, as enforced by the stationary conditions, whereas the \acrshort{pvt} conditions provide a separate, stronger test of optimality over all impulsive trajectories.} For a given transfer, the primer vector is reconstructed by solving a two-point boundary value problem, ensuring alignment of the primer vector with the impulsive velocity changes at both burn locations, i.e., $\primer_{j} = \dv{j}/\norm{\dv{j}}$ for $j=1,2$. The resulting primer vector history is then checked against the remaining PVT conditions. Under the assumption of two impulsive burns and smooth dynamics, the key requirement is that the magnitude of the primer vector remains strictly less than unity along the interior of the transfer arc. Violation of this condition implies that an additional intermediate burn is required for local optimality.

\taken{Figure \ref{fig:porkchop_seeds_family_pvt}} displays the families colored according to both their \taken{family indices (Table \ref{table:family_index})} and \acrshort{pvt} satisfaction. Family members that do not satisfy the \acrshort{pvt} requirements are plotted in gray. Family members that satisfy the \acrshort{pvt} criteria are colored with the scheme following \taken{Table \ref{table:family_index}}. \taken{Specifically, Families~1, 2, 3, 5, 7, 9, 10, and~12 contain at least one member satisfying the \acrshort{pvt} requirements.} Note, as \acrshort{pvt} supplies necessary conditions, it is possible that local maxima and saddle solutions (red and green in \taken{Fig. \ref{fig:porkchop_seeds_family_hess}}, respectively) also satisfy the \acrshort{pvt} criteria. \taken{From a broader perspective, the gray segments delineate the regions where the two-impulse assumption ceases to be optimal and additional impulses become advantageous; the location of the peak primer magnitude further indicates the insertion point of an intermediate burn \cite{lion1968primer}. The present framework therefore maps where this transition occurs, although it does not compute the resulting multi-impulse solutions. Such fixed-revolution, multi-impulse optima with primer-vector optimality guarantees have been treated separately, e.g., \citet{arya2023generation}, and the two-impulse families identified here may serve as building blocks for these multi-impulse solutions \cite{saloglu2023existence, saloglu2025classification}. Extending the family-continuation framework itself to multi-impulse transfers remains a non-trivial direction for future work.}


\subsubsection{\label{sec:porkchop_projection}Projection to Porkchop Plots (Specific to the \texorpdfstring{$\t-\tof$}{} Domain)}

\begin{figure}[htbp]
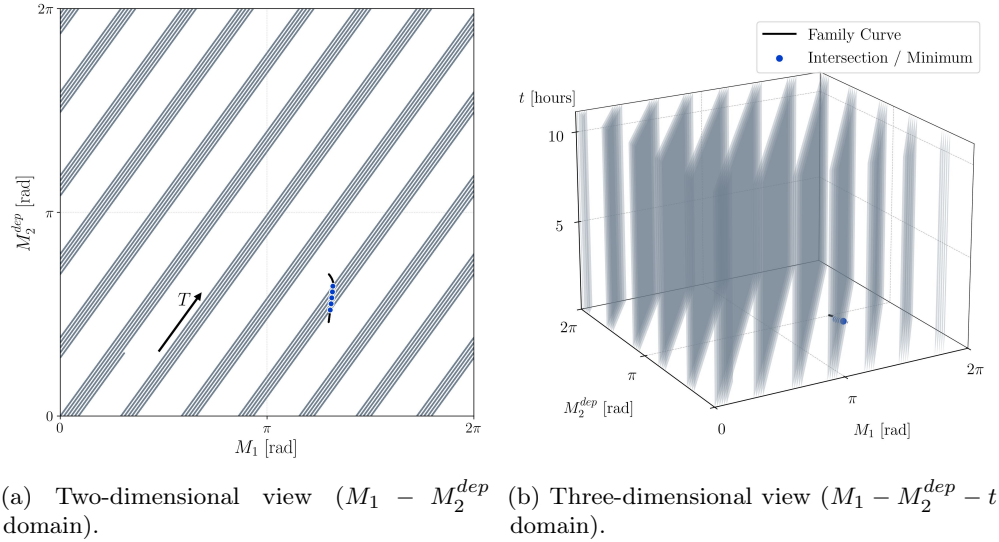
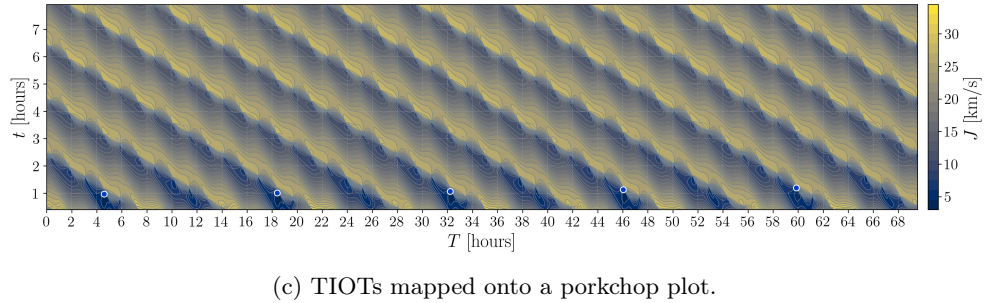

  \centering
  \begin{subfigure}[b]{0.49\textwidth}
    \centering
    \includegraphics[page=21, width=0.99\textwidth]{figures_everything.pdf}
    \caption{Two-dimensional view ($\ma{1}-\mad{2}$ domain).}
    \label{fig:parabola_seed0_porkchop_intersection_long_2d}
  \end{subfigure}
  \hfill
  \begin{subfigure}[b]{0.49\textwidth}
    \centering
    \includegraphics[page=22, width=0.99\textwidth]{figures_everything.pdf}
    \caption{Three-dimensional view ($\ma{1}-\mad{2}-\tof$ domain).}
    \label{fig:parabola_seed0_porkchop_intersection_long_3d}
  \end{subfigure}
  \vspace{1em}
  \begin{subfigure}[b]{1.0\textwidth}
    \centering
    \includegraphics[page=23, width=0.99\textwidth]{figures_everything.pdf}
    \caption{\acrshort{tiots} mapped onto a porkchop plot.}
    \label{fig:parabola_seed0_porkchop_intersection_long}
  \end{subfigure}
  \caption{Projection and intersection of Family 9 (Table \ref{table:family_index} and Fig. \ref{fig:family_connection1}) with a representative porkchop plot.}
  \label{fig:parabola_seed0_porkchop_intersection_long_combined}
\end{figure}

\begin{figure}[htbp]
  \centering
  \begin{subfigure}[b]{1.0\textwidth}
    \centering
    \includegraphics[page=25, width=0.99\textwidth]{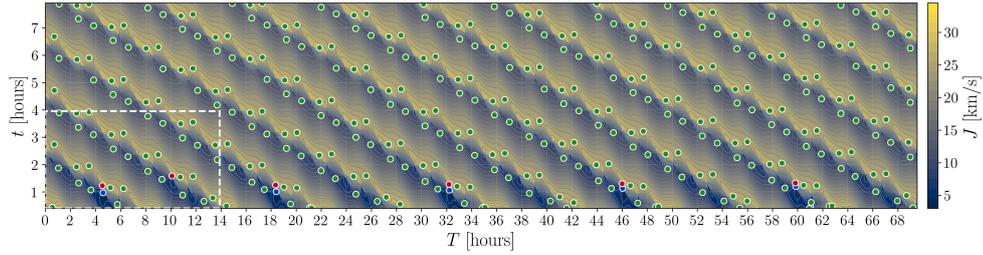}
    \caption{$d_1 = \mathrm{s}$ \taken{(short transfer)}.}
  \end{subfigure}
  \vspace{1em}
  \begin{subfigure}[b]{1.0\textwidth}
    \centering
    \includegraphics[page=24, width=0.99\textwidth]{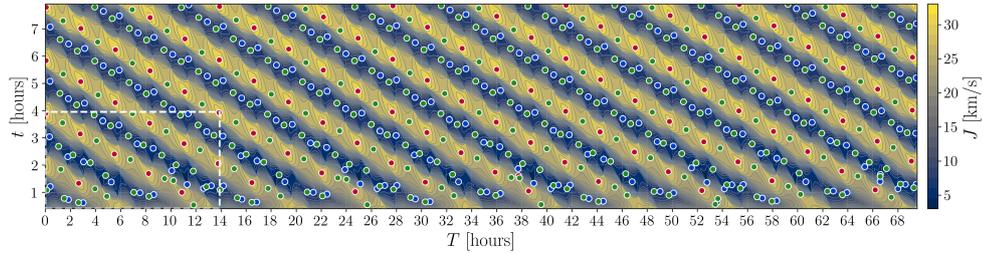}
    \caption{$d_1 = \mathrm{l}$ \taken{(long transfer)}.}
  \end{subfigure}
  \caption{Projection of all \acrshort{tiot} families (Table \ref{table:family_index} and Fig. \ref{fig:family_connection1}) onto a porkchop plot. The white dashed box indicates the limited range of the initial grid search used for seed generation (Fig. \ref{fig:seeds_porkchop}).}
  \label{fig:porkchop_seeds_porkchop_intersection}
\end{figure}

Families of \acrshort{tiots} provide a compact, global representation of solutions that would otherwise require exhaustive searches over large time windows. To connect this family-based perspective with conventional porkchop plots, family members are projected onto the $\t-\tof$ domain. The key idea is to reinterpret the ``time axis'' of a porkchop plot through the evolution of the angular variables. As time $\t$ progresses, the angles advance linearly, so that $\ma{1} = \mm{1}\t + \ma{1}^0$, $\mad{2} = \mm{2}\t + \ma{2}^0$, tracing straight ``time-lines'' in the $\ma{1}-\mad{2}$ domain\footnote{While the results thus far primarily visualize families in the $\ma{1}-\ma{2}$ domain, the $\ma{1}-\mad{2}$ domain is more effective for juxtaposing the $\t-\tof$ porkchop representation with the higher-dimensional family structure.}. In practice, these time-lines may be visualized as black straight trajectories in Fig. \ref{fig:parabola_seed0_porkchop_intersection_long_2d} initiating from $\ma{1}^0 = \ma{2}^0 = 0$. After one full revolution in either angle, the corresponding angle wraps around and the trajectory re-initializes from $0$ in that angular coordinate. When the mean motion ratio $\mm{1}/\mm{2}$ is non-resonant, the time-lines become dense over an infinite time span: as $\t\rightarrow \infty$, it visits arbitrarily close to every possible combination of $\ma{1}-\mad{2}$. In this sense, the base variable of a porkchop plot, $\t$, is decomposed into the angular variables that parametrize the family curves in $\ma{1}-\mad{2}-\tof$ space, tracing the time-lines or ``time-planes'' in Figs. \ref{fig:parabola_seed0_porkchop_intersection_long_2d} and \ref{fig:parabola_seed0_porkchop_intersection_long_3d}, respectively. With this interpretation, stationary points on porkchop plots arise naturally as intersection events. Specifically, intersections between the time-lines (or, time-planes) and an \acrshort{tiot} family curve correspond to transfer configurations that satisfy the stationary conditions in the $\t-\tof$ domain. Figure \ref{fig:parabola_seed0_porkchop_intersection_long_combined} illustrates this process for a local minimum fragment satisfying PVT of one representative family (e.g., Family 9 in Table \ref{table:family_index}); the family is illustrated in the $\ma{1}-\mad{2}-\t$ space together with the time-planes (Fig. \ref{fig:parabola_seed0_porkchop_intersection_long_3d}), and the intersection points map directly to stationary points in the porkchop plot (Fig. \ref{fig:parabola_seed0_porkchop_intersection_long}). In this case, the intersecting members are locally minimum \acrshort{tiots} and, thus, colored in blue. Repeating this process over all indexed families yields a systematic catalog of stationary points across the porkchop domain, as illustrated in Fig. \ref{fig:porkchop_seeds_porkchop_intersection}. The white dotted box corresponds to the initial bounds for the grid search supplying initial seeds within the framework (Section \ref{sec:seeds_grid}).

This projection-based approach complements conventional porkchop analysis. One may certainly inspect porkchop plots visually and/or apply gradient-based solvers to locate local extrema \cite{bang2018two}. However, porkchop plots are fundamentally constrained by grid resolution: when multiple extrema are closely spaced or visually indistinguishable, reliable detection typically requires a finer grid and additional numerical effort. Moreover, when the time window of interest is wide, a large fraction of the sampling effort becomes wasteful because only limited regions contain meaningful optima\taken{; that is, the locally optimal transfers are sparse within the broad $(\t,\tof)$ window, so a uniform grid expends most of its samples in regions far from any optimum}. Although the near-periodicity can sometimes be exploited to reduce the search domain \cite{bang2018two}, the Lambert geometry is not perfectly repetitive, and optima may appear or disappear between cycles. Without global structural insight, one may either miss relevant solutions or expend substantial computation exploring uninformative regions. In contrast, the proposed framework replaces repeated local optimization over time windows with the identification of intersection points between two curves, i.e, the time-lines and \acrshort{tiot} families. Once the families are computed, the time-line construction applies to an effectively unbounded range of $\t$ under the assumed Keplerian dynamics, enabling efficient and systematic detection of stationary points without relying on high-resolution porkchop sampling.

\section{\label{sec:cases} Case Studies}

This section demonstrates the proposed framework through a set of representative transfer scenarios. First, a baseline scenario is established that coincides with the illustrative example utilized in the preceding sections (Table \ref{table:sample_orbits_first}). This case is examined in detail to provide a comprehensive atlas of \acrshort{tiot} families, including representative transfer geometries and a classification of family branches. Within this baseline landscape, particular attention is paid to practically desirable configurations, specifically, locally minimal solutions that also satisfy the \acrshort{pvt} necessary conditions for two-impulse optimality. The burn characteristics and transfer orbit properties of these desirable configurations are analyzed to provide physical insight. Second, a controlled parametric study is conducted by varying the relative inclination while holding the remaining orbital elements fixed to the baseline configuration. This study tracks the evolution of the desirable configurations and their associated families as the inclination increases, revealing bifurcations as well as the creation or disappearance of solution branches. Finally, the broader implications and limitations of the current results are discussed, highlighting the role of the family-based viewpoint in complementing conventional porkchop-based searches.

\subsection{Baseline Transfer Scenario (Table \ref{table:sample_orbits_first})}

\subsubsection{Global Landscape and Family Classification}

Within the temporal domain, $12$ families are identified and indexed in Table \ref{table:family_index}, as plotted in Fig. \ref{fig:family_connection1}. Representative transfer geometries for each \acrshort{tiot} family are depicted in Fig. \ref{fig:orbit1_family_geometry}. Each family yields unique intersections on the porkchop plot, where detailed behaviors are recorded in \hyperref[app:family_porkchop]{Appendix F}. Naturally, not all solution geometries are of practical utility; solutions with undesirable characteristics, such as an excessively large $J$, may be screened out. 

\begin{figure}[b]
  \centering
  \begin{subfigure}[b]{0.39\textwidth}
    \centering
    \includegraphics[page=26, width=0.99\textwidth]{figures_everything.pdf}
    \caption{\label{fig:orbit1_family_geometry_seed7(family1)}Family 1.}
  \end{subfigure}
  \hfill
  \begin{subfigure}[b]{0.29\textwidth}
    \centering
    \includegraphics[page=27, width=0.99\textwidth]{figures_everything.pdf}
    \caption{\label{fig:orbit1_family_geometry_seed8(family2)}Family 2.}
  \end{subfigure}
  \hfill
  \begin{subfigure}[b]{0.29\textwidth}
    \centering
    \includegraphics[page=28, width=0.99\textwidth]{figures_everything.pdf}
    \caption{\label{fig:orbit1_family_geometry_seed10(family3)}Family 3.}
  \end{subfigure}
  \begin{subfigure}[b]{0.32\textwidth}
    \centering
    \includegraphics[page=29, width=0.99\textwidth]{figures_everything.pdf}
    \caption{\label{fig:orbit1_family_geometry_seed9(family4)}Family 4.}
  \end{subfigure}
  \begin{subfigure}[b]{0.32\textwidth}
    \centering
    \includegraphics[page=30, width=0.99\textwidth]{figures_everything.pdf}
    \caption{\label{fig:orbit1_family_geometry_seed4(family5)}Family 5.}
  \end{subfigure}
  \begin{subfigure}[b]{0.32\textwidth}
    \centering
    \includegraphics[page=31, width=0.99\textwidth]{figures_everything.pdf}
    \caption{\label{fig:orbit1_family_geometry_seed5(family6)}Family 6.}
  \end{subfigure}
  \caption{Visual catalog of the 12 identified \acrshort{tiot} families
  (Fig. \ref{fig:family_connection1}) for the baseline scenario
  (Table \ref{table:family_index}). Continued on the next page.}
  \label{fig:orbit1_family_geometry}
\end{figure}

\begin{figure}[!tpb]\ContinuedFloat
  \centering
  \begin{subfigure}[b]{0.32\textwidth}
    \centering
    \includegraphics[page=32, width=0.99\textwidth]{figures_everything.pdf}
    \caption{\label{fig:orbit1_family_geometry_seed2(family7)}Family 7.}
  \end{subfigure}
  \begin{subfigure}[b]{0.32\textwidth}
    \centering
    \includegraphics[page=33, width=0.99\textwidth]{figures_everything.pdf}
    \caption{\label{fig:orbit1_family_geometry_seed3(family8)}Family 8.}
  \end{subfigure}
  \begin{subfigure}[b]{0.32\textwidth}
    \centering
    \includegraphics[page=34, width=0.99\textwidth]{figures_everything.pdf}
    \caption{\label{fig:orbit1_family_geometry_seed0(family9)}Family 9.}
  \end{subfigure}
  \begin{subfigure}[b]{0.32\textwidth}
    \centering
    \includegraphics[page=35, width=0.99\textwidth]{figures_everything.pdf}
    \caption{\label{fig:orbit1_family_geometry_seed1(family10)}Family 10.}
  \end{subfigure}
  \begin{subfigure}[b]{0.32\textwidth}
    \centering
    \includegraphics[page=36, width=0.99\textwidth]{figures_everything.pdf}
    \caption{\label{fig:orbit1_family_geometry_seed6(family11)}Family 11.}
  \end{subfigure}
  \begin{subfigure}[b]{0.32\textwidth}
    \centering
    \includegraphics[page=37, width=0.99\textwidth]{figures_everything.pdf}
    \caption{\label{fig:orbit1_family_geometry_seed11(family12)}Family 12.}
  \end{subfigure}
  \caption[]{Visual catalog of the 12 identified \acrshort{tiot} families
  (continued).}
\end{figure}
\FloatBarrier

\subsubsection{\label{sec:orbit1_optimal}Anatomy of Min-$J$ Geometries}

Among the various locally stationary families, those that contain local minima satisfying the \acrshort{pvt} conditions are examined further. While all members along a given family are locally stationary, the member achieving the minimum cost $J$ within the family is termed the ``min-$J$'' \acrshort{tiot}. Figures \ref{fig:orbit1_desirable_1}--\ref{fig:orbit1_desirable_3} illustrate three such configurations\footnote{While Family 3 also supplies members with locally minimal \acrshort{tiots} that satisfy the \acrshort{pvt} conditions, they represent hyperbolic Lambert arcs and are not considered desirable.}. Figures \ref{fig:orbit1_seed[0, 8]_local_optimal.png}, \ref{fig:orbit1_seed7_local_optimal.png}, and \ref{fig:orbit1_seed3_local_optimal.png} display the family branches in the solution space. The first optimal configuration spans across Families 2 and 9; thus, these two families are presented together in Fig. \ref{fig:orbit1_seed[0, 8]_local_optimal.png}. Additionally, the min-$J$ \acrshort{tiot} in each configuration is depicted in the Earth-centered inertial frame in Figs. \ref{fig:orbit1_seed[0, 8]_local_optimal_orbit.png}, \ref{fig:orbit1_seed7_local_optimal_orbit.png}, and \ref{fig:orbit1_seed3_local_optimal_orbit.png}. 

The first two geometries (Figs. \ref{fig:orbit1_desirable_1}--\ref{fig:orbit1_desirable_2}) share common characteristics. These transfers typically occur in the vicinity of the nodes, i.e., both the departure and arrival points lie near the common nodal line where the two orbital planes intersect. This configuration is also observable from the locations of the optimal members within the $\ma{1}-\ma{2}$ domain that lie close to the singular lines, $\pi_{1}$ and $\pi_{2}$. \taken{It is emphasized, however, that these optima lie in the vicinity of the nodes but not exactly on the nodal line; their transfer angles deviate slightly from $\theta = 180^\circ$. This deviation is consistent with \citet{vinh1988optimal}, who note that \emph{exact} nodal transfers ($\theta = 180^\circ$) typically do not render locally optimal solutions; the optima therefore settle just off the exact nodal configuration.} For the specific orbital combination in Table \ref{table:sample_orbits_first}, the departure orbit is associated with a larger radius from the central body (Earth). Consequently, the optimal transfers utilize a departure burn that primarily corrects for the inclination change. Table \ref{table:orbit1_desirable_burn_component} details the burn characteristics for the min-$J$ configurations. The components of the two burns, $\norm{\Delta \bm{v}_1}$ and $\norm{\Delta \bm{v}_2}$, are decomposed into the unit vectors of the \acrfull{lvlh} frame: the radial unit vector $\hat{r}$, the angular momentum direction $\hat{h}$, and the tangential direction $\hat{t}$. Burns in the $\hat{h}$-direction are responsible for changing the orbital plane ($i, \Omega$).

A key distinction regarding the plane change strategy is observed between the first two configurations. Note that the departure and arrival orbits are inclined at $i_1 = 0$ and $i_2 = 30^\circ$, respectively. Min-$J$ \acrshort{tiot} 1 performs nearly all necessary inclination change at departure ($\norm{\Delta \bm{v}_h} \approx 2.58$ km/s), resulting in a transfer orbit inclination of $i \approx 27.6^\circ$, close to the target's $30^\circ$. In contrast, min-$J$ \acrshort{tiot} 2, departing near the perigee, distributes the plane change maneuver more evenly; it reaches an intermediate inclination of only $i \approx 20.8^\circ$, leaving a significant correction ($\norm{\Delta \bm{v}_h} \approx 0.79$ km/s) for the arrival burn. This difference substantiates the underlying asymmetry between the two configurations. Given the non-circular geometry of the orbits ($e_1 = e_2 = 0.1$), the two configurations offer distinct transfer opportunities. Since the first geometry permits a departure near the apogee ($\ma{1} \approx 180^\circ$) and an arrival near the perigee ($\ma{2} \approx 0^\circ$), the optimal solution favors a larger inclination change at the first burn where the velocity is lower. In contrast, the second geometry is associated with a departure near the perigee ($\ma{1} \approx 0^\circ$) and an arrival near the apogee ($\ma{2} \approx 180^\circ$), rendering the inclination change at the first burn less efficient.

The third min-$J$ \acrshort{tiot} presents an intriguing case that contrasts with the previous examples. While the first two min-$J$ configurations naturally align with the expected geometries near the nodes, the third solution emerges as an additional, distinct local optimum. Although it bears a superficial resemblance to the first \acrshort{tiot}, it originates from a completely separate family branch, as evidenced by the disconnection between Families 8 and 9 in Fig. \ref{fig:family_connection1}. Notably, this transfer exhibits the highest cost, driven by a massive initial plane change ($\norm{\Delta \bm{v}_h} \approx 2.93$ km/s) that instantly matches the target inclination ($i \approx 30.2^\circ$). The appearance of this isolated optimal branch, existing independently from the primary nodal families, highlights the non-trivial topology of the solution space. While this analysis cannot be generalized to other orbital configurations, it supplies useful insights into transfers with non-negligible inclination change ($\Delta i = 30^\circ$).


\begin{figure}[htpb]
    \centering
    \begin{subfigure}[b]{0.48\textwidth}
        \centering
        \includegraphics[page=50, width=0.99\textwidth]{figures_everything.pdf}
        \caption{Families 2 and 9 in the $M_1-M_2-\tof$ space. \taken{The local-minimum solutions satisfying PVT are displayed in blue, whereas the others are plotted in gray.}}
        \label{fig:orbit1_seed[0, 8]_local_optimal.png}
    \end{subfigure}
    \begin{subfigure}[b]{0.48\textwidth}
        \centering
        \includegraphics[page=51, width=0.99\textwidth]{figures_everything.pdf}
        \caption{Min-$J$ \acrshort{tiot} 1 (yellow star in Fig. \ref{fig:orbit1_seed[0, 8]_local_optimal.png}).}
        \label{fig:orbit1_seed[0, 8]_local_optimal_orbit.png}
    \end{subfigure}
    \caption{Candidate optimal configuration 1 located in Families 2 and 9.}
    \label{fig:orbit1_desirable_1}
\end{figure}

\begin{figure}[htpb]
    \centering
    \begin{subfigure}[b]{0.48\textwidth}
        \centering
        \includegraphics[page=52, width=0.99\textwidth]{figures_everything.pdf}
        \caption{Family 1 in the $M_1-M_2-\tof$ space. \taken{The local-minimum solutions satisfying PVT are displayed in blue, whereas the others are plotted in gray.}}
        \label{fig:orbit1_seed7_local_optimal.png}
    \end{subfigure}
    \begin{subfigure}[b]{0.48\textwidth}
        \centering
        \includegraphics[page=53, width=0.99\textwidth]{figures_everything.pdf}
        \caption{Min-$J$ \acrshort{tiot} 2 (yellow star in Fig. \ref{fig:orbit1_seed7_local_optimal.png}).}
        \label{fig:orbit1_seed7_local_optimal_orbit.png}
    \end{subfigure}
    \caption{Candidate optimal configuration 2 located in Family 1.}
    \label{fig:orbit1_desirable_2}
\end{figure}

\begin{figure}[htpb]
    \centering
    \begin{subfigure}[b]{0.48\textwidth}
        \centering
        \includegraphics[page=54, width=0.99\textwidth]{figures_everything.pdf}
        \caption{Family 8 in the $M_1-M_2-\tof$ space. \taken{The local-minimum solutions satisfying PVT are displayed in blue, whereas the others are plotted in gray.}}
        \label{fig:orbit1_seed3_local_optimal.png}
    \end{subfigure}
    \begin{subfigure}[b]{0.48\textwidth}
        \centering
        \includegraphics[page=55, width=0.99\textwidth]{figures_everything.pdf}
        \caption{Min-$J$ \acrshort{tiot} 3 (yellow star in Fig. \ref{fig:orbit1_seed3_local_optimal.png}).}
        \label{fig:orbit1_seed3_local_optimal_orbit.png}
    \end{subfigure}
    \caption{Candidate optimal configuration 3 located in Family 8.}
    \label{fig:orbit1_desirable_3}
\end{figure}

\begin{table}[htpb]
\centering
\caption{\label{table:orbit1_desirable_burn_component}
Detailed burn characteristics and impulse components for the three min-$J$ configurations (Figs.~\ref{fig:orbit1_desirable_1}--\ref{fig:orbit1_desirable_3})}
\begin{tabular*}{\textwidth}{@{\extracolsep\fill}lcccccc}
\toprule
 & \multicolumn{2}{c}{Min-$J$ \acrshort{tiot} 1}
 & \multicolumn{2}{c}{Min-$J$ \acrshort{tiot} 2}
 & \multicolumn{2}{c}{Min-$J$ \acrshort{tiot} 3} \\
\cmidrule{2-3}\cmidrule{4-5}\cmidrule{6-7}
 & Burn 1 & Burn 2 & Burn 1 & Burn 2 & Burn 1 & Burn 2 \\
\cmidrule{2-3}\cmidrule{4-5}\cmidrule{6-7}
Total $J$ [km/s]        
                        & \multicolumn{2}{c}{$3.34$} 
                        & \multicolumn{2}{c}{$3.67$}
                        & \multicolumn{2}{c}{$3.86$} \\
\midrule
$\norm{\Delta \bm{v}}$ [km/s] 
                        & $2.83$ & $0.52$
                        & $2.49$ & $1.18$
                        & $3.09$ & $0.77$ \\
$\norm{\Delta \bm{v}_r}$ [km/s] 
                        & $0.78$ & $0.26$ 
                        & $0.63$ & $0.54$
                        & $0.67$ & $0.51$ \\
$\norm{\Delta \bm{v}_t}$ [km/s] 
                        & $0.84$ & $0.40$ 
                        & $0.72$ & $0.70$
                        & $0.70$ & $0.50$ \\
$\norm{\Delta \bm{v}_h}$ [km/s] 
                        & $2.58$ & $0.21$ 
                        & $2.30$ & $0.77$
                        & $2.93$ & $0.28$ \\
\bottomrule
\end{tabular*}
\footnotetext{Note: \acrshort{tiots} 1, 2, and 3 correspond to Figs.~\ref{fig:orbit1_seed[0, 8]_local_optimal_orbit.png}, \ref{fig:orbit1_seed7_local_optimal_orbit.png}, and \ref{fig:orbit1_seed3_local_optimal_orbit.png}, respectively.}
\end{table}


\subsubsection{\label{sec:long_horizon}Long-Horizon Planning via Family Projection}

Following the projection procedure described in Section~\ref{sec:porkchop_projection}, the portions of the \acrshort{tiot} families that are locally minimal and satisfy the \acrshort{pvt} conditions are projected onto the $\ma{1}-\mad{2}$ domain, as depicted in Fig.~\ref{fig:orbit1_desirable_planning}. The intersections of the time-lines and these optimal family segments indicate the availability of each optimal geometry as a function of wait time $T$.

\begin{figure}[htpb]
    \centering
    \includegraphics[page=74, width=0.8\textwidth]{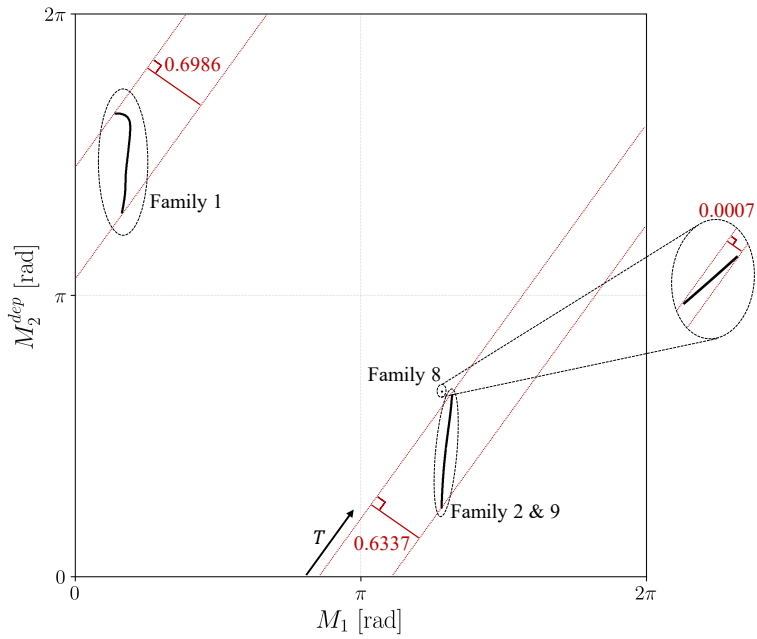}
    \caption{Identification of locally minimal transfer opportunities over $1$ year for the baseline scenario (Table~\ref{table:sample_orbits_first}). Black dots indicate crossings between the time-lines and locally minimal family segments satisfying the \acrshort{pvt} conditions.}
    \label{fig:orbit1_desirable_planning}
\end{figure}

For the baseline transfer scenario (Table~\ref{table:sample_orbits_first}), the orbital periods are not in resonance; consequently, the time-lines require infinite time to fill the full space of angular variable combinations, gradually covering the $\ma{1}-\mad{2}$ domain. In such generic, non-resonant configurations, the length of the \acrshort{tiot} family portions within the angular domain directly correlates to the ``robustness'' of the optimal configurations, i.e., how frequently a given optimal configuration is encountered by the advancing time-lines. A longer family segment perpendicular to the time-line direction implies more frequent crossings and, therefore, more frequent transfer opportunities.

To quantify this relationship, crossings between the time-lines and the locally optimal family segments are recorded over a representative time horizon of $1$ year. Table~\ref{table:crossing_stats} summarizes the number of crossings for each family, along with the corresponding perpendicular ($\perp$) length of the family within the angular domain. Among the three configurations, Family 8 (Fig.~\ref{fig:orbit1_desirable_3}) occupies only a narrow length within the angular domain and consequently generates only a single crossing over the entire $1$-year horizon. In contrast, the first two configurations exhibit substantially longer family segments and correspondingly more frequent transfer opportunities, directly confirming the correlation between angular-domain family length and temporal recurrence. 

\begin{table}[htpb]
\centering
\caption{\label{table:crossing_stats}
Number of crossings and perpendicular family length for each min-$J$ configuration over $1$ year.}
\begin{tabular*}{\textwidth}{@{\extracolsep\fill}llcc}
\toprule
Family No. & Representative Configuration & $\perp$ length [rad] & Crossings ($1$ yr) \\
\midrule
Family 2 \& 9 & Min-$J$ \acrshort{tiot} 1 (Fig.~\ref{fig:orbit1_desirable_1}) & 0.6337 & 544 \\
Family 1 & Min-$J$ \acrshort{tiot} 2 (Fig.~\ref{fig:orbit1_desirable_2}) & 0.6986 & 598 \\
Family 8 & Min-$J$ \acrshort{tiot} 3 (Fig.~\ref{fig:orbit1_desirable_3}) & 0.0007 & 1 \\
\bottomrule
\end{tabular*}
\end{table}


The results demonstrate the utility of the family-based framework for long-horizon mission planning. As established in Section~\ref{sec:porkchop_projection}, once the optimal families are constructed, detecting crossings with the time-lines reduces to a geometric intersection computation whose cost does not scale with the length of the planning horizon. For the present baseline scenario, all optimal transfer opportunities over the $1$-year horizon are enumerated almost instantly, confirming that the family-based approach enables efficient and systematic long-range planning that would be impractical with conventional epoch-by-epoch grid searches.

\subsection{\taken{Parametric Study: Topological Evolution under Inclination Change}}

The generalized orbital transfer problem involves a high-dimensional parameter space governed by the orbital elements of the departure and arrival orbits. While the proposed framework is fully capable of exploring variations across all these dimensions, attempting to analyze simultaneous variations would obscure the fundamental mechanisms driving the solution topology. Therefore, this study adopts a controlled parametric approach to isolate the impact of the plane change requirement as an exemplary analysis. While fixing other orbital parameters in Table \ref{table:sample_orbits_first}, multiple values for inclination are examined on the arrival side. A total of $15$ cases are examined, tracking the families from $i_2 = 0^\circ$ to $10^\circ$ with $1^\circ$ incremental steps, followed by larger intervals at $i_2 = 15^\circ, 20^\circ, 25^\circ, 30^\circ$. Special attention is directed toward the emergence and deformation of the practically desirable configurations identified previously with increasing inclination gap. By assessing the topology at incremental levels of inclination, it is possible to identify critical transitions, such as bifurcations, reconfigurations, or the disappearance of solution branches, that connect the familiar coplanar solutions to the complex structures observed for the baseline scenario discussed in Section \ref{sec:orbit1_optimal}.

\subsubsection{Coplanar Case (\texorpdfstring{$i_2 = 0^\circ$}{})}

The three desirable configurations identified in the preceding section (Figs \ref{fig:orbit1_desirable_1}-\ref{fig:orbit1_desirable_3}) originate from two distinct family branches in the coplanar transfer scenario. Denoting these families as ``root'' families, their behaviors in the three-dimensional solution space and the respective min-$J$\footnote{As defined in Section~\ref{sec:orbit1_optimal}, the ``min-$J$'' \acrshort{tiot} denotes the member achieving the minimum cost $J$ within the family.} \acrshort{tiots} along each family are supplied in Fig. \ref{fig:orbit7_desirable}. Note that Root family 1 (Fig. \ref{fig:orbit7_desirable_optimal}) displays a closed-loop behavior, hosting a wide range of \acrshort{tiot} solutions that are locally minimal and simultaneously satisfy the \acrshort{pvt} requirements. In contrast, Root family 2 (Fig. \ref{fig:orbit7_desirable_suboptimal}) originates from asymptotes at $\tof \rightarrow \infty$ and notably, contains no member that satisfies the \acrshort{pvt} necessary conditions while being locally minimal.

\begin{figure}[htpb]
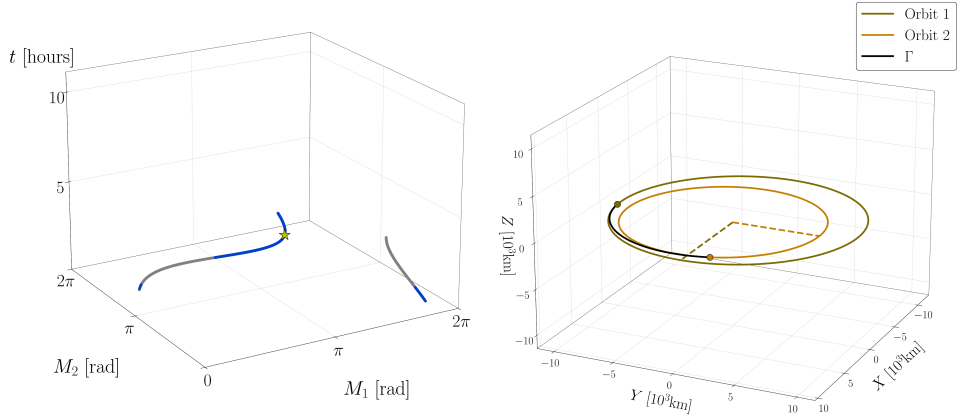
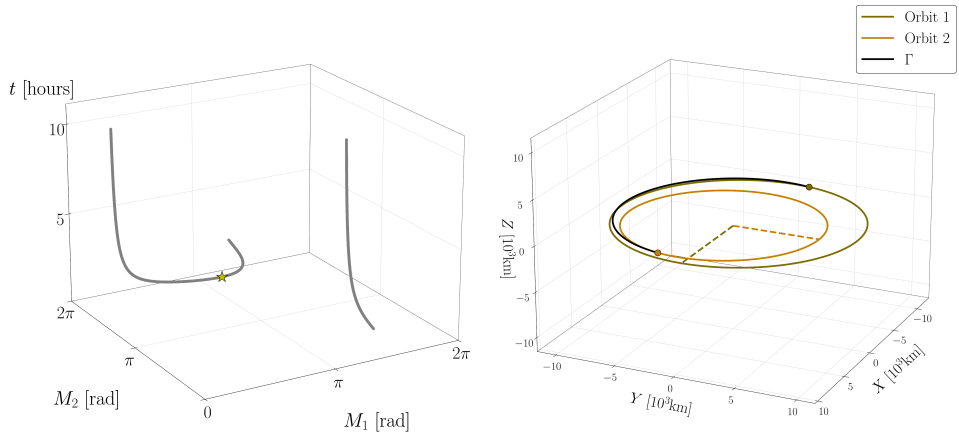

    \centering
    \begin{subfigure}[b]{0.48\textwidth}
        \centering
        \includegraphics[page=57, width=0.99\textwidth]{figures_everything.pdf}
        \caption{Root family 1. \taken{The local-minimum solutions satisfying PVT are displayed in blue, whereas the others are plotted in gray.}}
        \label{fig:orbit7_desirable_optimal}
    \end{subfigure}
    \begin{subfigure}[b]{0.48\textwidth}
        \centering
        \includegraphics[page=58, width=0.99\textwidth]{figures_everything.pdf}
        \caption{Min-$J$ \acrshort{tiot} along the family.}
        \label{fig:orbit7_desirable_optimal_transfer}
    \end{subfigure}
    \begin{subfigure}[b]{0.48\textwidth}
        \centering
        \includegraphics[page=59, width=0.99\textwidth]{figures_everything.pdf}
        \caption{Root family 2. \taken{The local-minimum solutions satisfying PVT are displayed in blue, whereas the others are plotted in gray.}}
        \label{fig:orbit7_desirable_suboptimal}
    \end{subfigure}
    \begin{subfigure}[b]{0.48\textwidth}
        \centering
        \includegraphics[page=60, width=0.99\textwidth]{figures_everything.pdf}
        \caption{Min-$J$ \acrshort{tiot} along the family.}
        \label{fig:orbit7_desirable_suboptimal_transfer}
    \end{subfigure}
    \caption{Behaviors of ``root'' families in the coplanar transfer scenario.}
    \label{fig:orbit7_desirable}
\end{figure}

\subsubsection{Bifurcation between \texorpdfstring{$i_2 = 0^\circ$}{} and \texorpdfstring{$i_2 = 1^\circ$}{}}

A non-zero inclination fundamentally alters the admissible geometries and branch structure of the \acrshort{tiot} families. Figure \ref{fig:inc1} illustrates the evolution of the root families as $i_2$ increases from $0^\circ$ to $1^\circ$. The emergence of a singular line introduces a pathway linking to Root family 1. While this family forms a closed loop in the coplanar case, the inclined configuration leads to a merging behavior near the singular configuration $\pi_1$. The resulting branch no longer closes, but instead exhibits asymptotic behavior, with its two ends approaching $\tof \rightarrow \infty$ and $\tof \rightarrow 0$, respectively, while remaining close to $\pi_1$. This topological transition reflects a bifurcation in the family structure triggered by the introduction of a small but non-zero inclination. Additionally, the introduction of a non-zero inclination results in a noticeable shrinkage in the subset of family members that satisfy the \acrshort{pvt} conditions (compare Figs.~\ref{fig:orbit7_desirable_optimal} and \ref{fig:inc1_root1}). For Root family 2, the positive inclination enables the emergence of an optimal configuration that satisfies the \acrshort{pvt} condition, a behavior not observed in the coplanar case (see Fig. \ref{fig:orbit7_desirable_suboptimal}). Although this family evolves along a trajectory distinct from the coplanar case, its asymptotic behavior and the locations of its asymptotes remain nearly unchanged, indicating that the global structure of the family is preserved while its local optimality properties are modified.

\begin{figure}[htpb]
    \centering
    \begin{subfigure}[b]{0.48\textwidth}
        \centering
        \includegraphics[page=61, width=0.99\textwidth]{figures_everything.pdf}
        \caption{Root family 1.}
        \label{fig:inc1_root1}
    \end{subfigure}
    \begin{subfigure}[b]{0.48\textwidth}
        \centering
        \includegraphics[page=62, width=0.99\textwidth]{figures_everything.pdf}
        \caption{Root family 2.}
        \label{fig:inc1_root2}
    \end{subfigure}
    \caption{Structural bifurcation induced at $i_2 = 1^\circ$. \taken{The local-minimum solutions satisfying PVT are displayed in blue, whereas the others are plotted in gray.}}
    \label{fig:inc1}
\end{figure}

\subsubsection{Bifurcation between \texorpdfstring{$i_2 = 1^\circ$}{} and \texorpdfstring{$i_2 = 2^\circ$}{}}

The first root family undergoes a subsequent bifurcation between $i_2 = 1^\circ$ and $i_2 = 2^\circ$. At $i_2 = 1^\circ$, a nearby branch exists that originates from the singular line $\pi_2$ and traverses in the vicinity of Root family 1. As $i_2$ increases, a change in connectivity occurs; Root family 1 turns and connects to the singular line. After this reconfiguration, the two branches drift further apart as the inclination increases (Fig. \ref{fig:inc5_root2_bif}).

\begin{figure}[htpb]
    \centering
    \begin{subfigure}[b]{0.48\textwidth}
        \centering
        \includegraphics[page=63, width=0.99\textwidth]{figures_everything.pdf}
        \caption{Root family 1 at $i_2 = 1^\circ$ and a nearby branch.}
        \label{fig:inc1_root1_bif}
    \end{subfigure}
    \begin{subfigure}[b]{0.48\textwidth}
        \centering
        \includegraphics[page=64, width=0.99\textwidth]{figures_everything.pdf}
        \caption{Root family 1 at $i_2 = 2^\circ$ and a reconfiguration.}
        \label{fig:inc2_root1_bif}
    \end{subfigure}
    \begin{subfigure}[b]{0.48\textwidth}
        \centering
        \includegraphics[page=65, width=0.99\textwidth]{figures_everything.pdf}
        \caption{Root family 1 at $i_2 = 5^\circ$ with a progressive drift.}
        \label{fig:inc5_root2_bif}
    \end{subfigure}
    \caption{Bifurcation of Root family 1 between $i_2 = 1^\circ$ and $i_2 = 5^\circ$. \taken{The local-minimum solutions satisfying PVT are displayed in blue, whereas the others are plotted in gray.}}
    \label{fig:inc1_bif}
\end{figure}

\subsubsection{Bifurcation between \texorpdfstring{$i_2 = 8^\circ$}{} and \texorpdfstring{$9^\circ$}{}}

Another significant bifurcation occurs between $i_2 = 8^\circ$ and $9^\circ$. In this regime, a branch with endpoints at $\theta \rightarrow 180^\circ$ traverses close to the Root family 1 (Fig. \ref{fig:inc8_root1_bif}). Further increasing the inclination essentially splits the branches into new configurations, as illustrated in Fig. \ref{fig:inc9_root1_bif}. These two split branches subsequently formulate the basis for the desirable configurations observed in the baseline case ($i_2 = 30^\circ$), corresponding to min-$J$ \acrshort{tiots} 1 (Fig. \ref{fig:orbit1_desirable_1}) and 3 (Fig. \ref{fig:orbit1_desirable_3}), respectively. 

Thus, the origin of the three optimal configurations for the baseline scenario can be traced back as follows: two branches originate from the bifurcation of Root family 1 in the coplanar case, and one branch continues from Root family 2. It is worth emphasizing that the geometry associated with Root family 2 does not satisfy the \acrshort{pvt} criteria in the coplanar limit (Fig. \ref{fig:orbit7_desirable_suboptimal}) but evolves to satisfy the necessary conditions as the inclination on the arrival side increases.

\begin{figure}[htpb]
    \centering
    \begin{subfigure}[b]{0.48\textwidth}
        \centering
        \includegraphics[page=66, width=0.99\textwidth]{figures_everything.pdf}
        \caption{Root family 1 at $i_2 = 8^\circ$ and a nearby branch.}
        \label{fig:inc8_root1_bif}
    \end{subfigure}
    \begin{subfigure}[b]{0.48\textwidth}
        \centering
        \includegraphics[page=67, width=0.99\textwidth]{figures_everything.pdf}
        \caption{Root family 1 at $i_2 = 9^\circ$ and a reconfiguration.}
        \label{fig:inc9_root1_bif}
    \end{subfigure}
    \caption{Bifurcation of Root family 1 between $i_2 = 8^\circ$ and $i_2 = 9^\circ$. \taken{The local-minimum solutions satisfying PVT are displayed in blue, whereas the others are plotted in gray.}}
    \label{fig:root1_bif2}
\end{figure}

\subsection{Discussions}

The results of the present study suggest that viewing \acrshort{tiots} through a family-based lens provides insights that are both confirmatory and novel. This section organizes these findings along two axes: results that revisit and corroborate known optimal structures, and results that reveal new structural features challenging to access with conventional approaches. Directions for future research are then outlined.

\subsubsection{Revisiting Known Optimal Structures}

The family-based framework naturally recovers optimal transfer geometries that are consistent with prior analytical expectations. In particular, the first two min-$J$ \acrshort{tiots} identified in Section~\ref{sec:orbit1_optimal} (Figs.~\ref{fig:orbit1_desirable_1}--\ref{fig:orbit1_desirable_2}) correspond to near-nodal transfers, with departure and arrival positions lying close to the line of nodes connecting the two orbital planes. This geometry aligns with the ``modified Hohmann transfer'' and optimal nodal transfer concepts discussed by \citet{baker1966orbit} and \citet{vinh1988optimal}, confirming that the proposed framework is consistent with established heuristic strategies. However, whereas prior work identifies these configurations through geometric reasoning or case-specific optimization, the present analysis locates them as distinguished members of continuous families, thereby situating them within the broader optimality landscape. Furthermore, the long-horizon analysis in Section~\ref{sec:long_horizon} demonstrates that these near-nodal configurations recur frequently over extended planning horizons, quantitatively confirming their practical relevance for mission design.

\subsubsection{New Insights from the Family-Based Perspective}

Beyond recovering known structures, the family-based framework reveals features of the solution landscape that are not apparent from isolated optimization or grid-based searches. In particular, the emergence, disappearance, and multiplicity of solution basins observed in the parametric study are naturally interpreted as consequences of bifurcations, collisions, and reconnections of \acrshort{tiot} families, rather than as disconnected phenomena tied to specific initial conditions or grid resolutions. This viewpoint emphasizes that the number and structure of locally optimal solutions are global properties of the underlying dynamical and geometric configuration, and are therefore better understood by tracking continuous families than by enumerating discrete optima.

A notable example is the third min-$J$ \acrshort{tiot} (Fig.~\ref{fig:orbit1_desirable_3}), an isolated optimal branch that originates from a completely separate family and bears no obvious geometric connection to the nodal transfers. This solution would be difficult to anticipate from analytical heuristics and could easily be missed by grid searches that concentrate sampling near expected favorable geometries. Its discovery through family continuation underscores the value of a systematic, exhaustive exploration of the solution space.

From a practical standpoint, the family-based formulation captures the entire neighborhood of optimality, explicitly revealing how nearby configurations vary in both cost and geometry. The width of a family in the angular domain provides a natural measure of robustness (as quantified in Section~\ref{sec:long_horizon}); if a nominal launch or maneuver opportunity is missed, adjacent family members immediately suggest alternative near-optimal solutions, e.g., as illustrated in Fig.~\ref{fig:orbit1_desirable_planning}. Furthermore, since families are tracked continuously, the identification of the global minimum is less susceptible to convergence toward spurious local optima, a limitation of gradient-based or discretized search methods. In this sense, the proposed framework serves as a bridge between local analytical optimality conditions and the global numerical exploration required for comprehensive and reliable mission planning.

\subsubsection{\label{sec:conclusions_future_work}Future Work}

The present work primarily introduces and demonstrates the framework, and several directions for future research remain open. Classical analytically tractable scenarios, such as Hohmann or coaxial transfers, may be revisited from a family-based perspective to reinterpret known optimal solutions as limiting members of broader families. More importantly, the present results suggest that the creation, reconnection, or annihilation of optimal families may be governed by identifiable geometric or dynamical mechanisms, such as the appearance of additional parabolic asymptotes or singular configurations. A more rigorous treatment of the $\theta = 180^\circ$ singularity, as well as a deeper analysis of bifurcations arising in the coplanar transfer scenario, may enable semi-analytical predictions of family creation and reconnection. Such developments would strengthen the link between analytical optimality theory and global numerical exploration of orbital transfer problems.

\section{\label{sec:conclusions}Concluding Remarks} 

The classical fuel-optimal two-impulse rendezvous problem between Keplerian orbits is revisited from a family-based perspective. Rather than treating optimal transfers as isolated point solutions, the proposed framework analyzes continuous families of stationary transfers, supplying a global view of the optimality landscape connecting two Keplerian orbits. This shift in perspective enables systematic identification, classification, and continuation of optimal solutions across different parameter domains, each revealing complementary structural information. The framework naturally accommodates multiple analytical and numerical strategies, including Hessian-based classification, the Primer Vector Theory conditions, and direct juxtaposition with porkchop plots. By embedding conventional optimal solutions within higher-dimensional families, the approach clarifies the emergence, merging, and disappearance of local minima, saddle points, and disconnected solution basins as orbital parameters vary. From a practical standpoint, the family-based formulation complements existing theory and practice for fuel-optimal transfers by exposing the structure and robustness of nearby solutions, rather than isolating a single optimum. This global characterization is particularly valuable for mission design, where flexibility, sensitivity, and alternative near-optimal options are often as important as the nominal minimum-cost solution.


\backmatter

\begin{appendices}
\section{Gradient Information for the Continuation Scheme}
\label{app:gradient}

For a differential corrections process to successfully converge to $\fv^*$ that satisfies the constraints in Eq. \eqref{eq:c}, the Jacobian matrix $\partial \c / \partial \fv$ is required. Since the constraints themselves define the first-order optimality conditions (gradients of $J$ with respect to $\fv$), the required Jacobian involves the second-order derivatives of the cost function $J$ in terms of $\fv$.

\subsection{\label{sec:1st_deriv}First Order Derivatives: \texorpdfstring{$\nabla J $}{}}

The first order derivatives effectively constitute the optimality conditions in Eq. \eqref{eq:c}. They are evaluated as,
\begin{align}
    \nabla J = \frac{\partial J}{ \partial \fv } = \left[ \frac{\partial J }{\partial \ma{1}} , \frac{\partial J }{\partial \ma{2}} , \frac{\partial J }{\partial \tof} \right]^\transpose.
\end{align}
The derivatives with respect to $x_{1}$ and $x_{2}$ are trivially supplied via the chain rule that involves $\partial x_{1} / \partial \fv$ and $\partial x_{2} / \partial \fv$. The first two components require sensitivities of $J$ with respect to the departure and arrival locations that correlate to $\partial \vella{1}/ \partial \pos{1}$, $\partial \vella{1}/ \partial \pos{2}$, $\partial \vella{2}/ \partial \pos{1}$, and $\partial \vella{2}/ \partial \pos{2}$. There exist multiple options in the literature for supplying the sensitivities of $\vella{1}$ and $\vella{2}$ in terms of $\pos{1}$ and $\pos{2}$ within the context of the Lambert problem, leveraging the \acrfull{stm} \cite{schumacher2015uncertain, arora2015partial}, Lagrange's transfer-time formulation \cite{zhang2018covariance}, and vercosine formulation \cite{arora2015partial}. The algebraic complexity, robustness, and computational time may differ for each of these options. In the current analysis, the \acrshort{stm} information is leveraged. While the \acrshort{stm}-based Lambert arc sensitivities may be slower as opposed to other geometry-based methods, they offer an advantage in terms of implementation complexity; the \acrshort{stm} applies to both elliptic and hyperbolic Lambert arcs. In geometry-based algorithms, the transfer types must be determined a priori. Regardless, any strategies may be leveraged to provide the first-order sensitivities. \acrshort{stm} captures the linear variational information along a given Lambert arc $\la$ as,
\begin{align}
    \mb \delta \pos{2} \\ \delta \vella{2}  \me = \mb \bm{\phi}_{\bm{rr}} & \bm{\phi}_{\bm{rv}} \\ \bm{\phi}_{\bm{vr}} & \bm{\phi}_{\bm{vv}} \me \mb \delta \pos{1} \\ \delta \vella{1} \me,
\end{align}
where $\bm{\phi}$ is the \acrshort{stm} evaluated from $\t$ to $\t + \tof$ within the Keplerian dynamics. The subscripts denote the subcomponents of the \acrshort{stm}. For example, $\bm{rv}$ corresponds to the change in the position components upon arrival ($\delta \pos{2}$) with respect to the initial velocity variation ($\delta \vella{1}$). Then, following the process from \citet{schumacher2015uncertain}, the velocity variations are solved in terms of the position variations as \cite{schumacher2015uncertain, arora2015partial},
\begin{align}
    \mb \delta \vella{1} \\ \delta \vella{2} \me = \mb -\bm{\phi}_{\bm{rv}}^{-1} \bm{\phi}_{\bm{rr}} & \bm{\phi}_{\bm{rv}}^{-1} \\ \bm{\phi}_{\bm{vr}} - \bm{\phi}_{\bm{vv}} \bm{\phi}_{\bm{rv}}^{-1} \bm{\phi}_{\bm{rr}} & \bm{\phi}_{\bm{vv}} \bm{\phi}_{\bm{rv}}^{-1} \me \mb \delta \pos{1} \\ \delta \pos{2}\me,
\end{align}
supplying the variation of $\vella{1,2}$ with respect to $\pos{1,2}$. With this information, the first order derivatives of $J$ with respect to $\ma{1,2}$ are retrieved as,
\begin{align}
    \frac{\partial J}{\partial \ma{1}} & = \frac{\dv{1}^\transpose}{\norm{\dv{1}}} \left( \frac{d\vel{1}}{d\ma{1}} - \frac{\partial \vella{1}}{\partial  \pos{1}}\frac{d \pos{1}}{d \ma{1}}\right) + \frac{\dv{2}^\transpose}{\norm{\dv{2}}} \left(- \frac{\partial \vella{2}}{\partial \pos{1}} \frac{d \pos{1}}{d \ma{1}} \right), \\
    \frac{\partial J}{\partial \ma{2}} & = \frac{\dv{1}^\transpose}{\norm{\dv{1}}} \left(- \frac{\partial \vella{1}}{\partial \pos{2}}\frac{d \pos{2}}{d \ma{2}} \right) + \frac{\dv{2}^\transpose}{\norm{\dv{2}}} \left(\frac{d\vel{2}}{d \ma{2}} - \frac{\partial \vella{2}}{\partial \pos{2}}\frac{d \pos{2}}{d \ma{2}} \right).
\end{align}
Here, the derivatives of $\bm{r}$ and $\bm{v}$ with respect to $M$ are evaluated along the departure and arrival ellipses. As $\bm{r}, \bm{v}$ are defined within the respective (invariant) Keplerian orbits, they solely depend on their respective mean anomalies; e.g., $\pos{2}$ does not depend on $\ma{1}$. In the following, the departure side is examined without losing generality. First, Kepler's equation is solved to supply the eccentric anomaly,
\begin{align}
    E_1 = E_1(\ma{1}; e_1),
\end{align}
with $\frac{dE_1}{dM_1} = \frac{1}{1-e_1\cos E_1}$. Within the perifocal frame ($pqw$), position and velocity vectors are represented as,
\begin{align}
    \pos{1}^{pqw} = a_1\mb \cos E_1 -e_1 & \sqrt{1-e_1^2}\sin E_1 & 0 \me^\transpose, \\
    \vel{1}^{pqw} = \frac{\sqrt{\mu a_1}}{\norm{\pos{1}}} \mb -\sin E_1 & \sqrt{1-e_1^2}\cos E_1 & 0 \me^\transpose.
\end{align}
Derivatives with respect to $\ma{1}$ are then,
\begin{align}
    \label{eq:dr_pqw_M}\frac{d\pos{1}^{pqw}}{d\ma{1}} &= a_1 \frac{dE_1}{d\ma{1}}     \mb -\sin E_1 & \sqrt{1-e_1^2}\cos E_1 & 0 \me^\transpose, \\
    \frac{d\vel{1}^{pqw}}{d\ma{1}}  &= \frac{dE_1}{d\ma{1}}    \Bigg[
        -a_1\!\left(\frac{\sqrt{\mu a_1}\, e_1 \sin E_1}{\norm{\pos{1}}^2}\right)
        \mb -\sin E_1 & \sqrt{1-e_1^2}\cos E_1 & 0 \me^\transpose\nonumber
        \\
        &\ + \frac{\sqrt{\mu a_1}}{\norm{\pos{1}}}
        \mb -\cos E_1 & -\sqrt{1-e_1^2}\sin E_1 & 0 \me^\transpose
    \Bigg].\label{eq:dv_pqw_M}
\end{align}
Vectors in the perifocal frame and the inertial frame are related via $\pos{1} = \bm{C}_1\pos{1}^{pqw}$. The direction cosine matrix is supplied with the Euler 3-1-3 rotation sequence as,
\begin{align}
    \bm{C}_1 = 
    \mb
        \cos\Omega_1\cos\omega_1 - \sin\Omega_1\sin\omega_1\cos i_1
        & -\cos\Omega_1\sin\omega_1 - \sin\Omega_1\cos\omega_1\cos i_1
        & \sin\Omega_1\sin i_1 \\[2pt]
        \sin\Omega_1\cos\omega_1 + \cos\Omega_1\sin\omega_1\cos i_1
        & -\sin\Omega_1\sin\omega_1 + \cos\Omega_1\cos\omega_1\cos i_1
        & -\cos\Omega_1\sin i_1 \\[2pt]
        \sin\omega_1\sin i_1
        & \cos\omega_1\sin i_1
        & \cos i_1
    \me.
\end{align}
Then, the desired sensitivities are supplied as,
\begin{align}
    \frac{d\pos{1}}{d\ma{1}} = \bm{C}_1 \frac{d\pos{1}^{pqw}}{d\ma{1}}, \quad
    \frac{d\vel{1}}{d\ma{1}} = \bm{C}_1 \frac{d\vel{1}^{pqw}}{d\ma{1}}.
\end{align}
With these sensitivities in hand, the derivative of $J$ with respect to the time of flight, $\tof$, is addressed. This component is more nuanced; specifically, $\partial J /\partial \tof = \delta J / \delta \tof + \partial J/ \partial\ma{2} \cdot d\ma{2} / d \tof$. The first term ($\delta J / \delta \tof$) captures the explicit dependency on $\tof$ for a given Lambert arc, and the second term ($\partial J/ \partial\ma{2} \cdot d\ma{2} / d \tof$) corresponds to the indirect impact where $\tof$ changes the arrival location. In the time-variable formulation, the variation equation along the Lambert arc results in,
\begin{align}
    \mb \delta \pos{2} \\ \delta \vella{2}  \me = \mb \bm{\phi}_{\bm{rr}} & \bm{\phi}_{\bm{rv}} &  \frac{d \pos{2}}{d \tau}\\ \bm{\phi}_{\bm{vr}} & \bm{\phi}_{\bm{vv}} & \frac{d \vella{2}}{d \tau} \me \mb \delta \pos{1} \\ \delta \vella{1}  \\ \delta \tof \me.
\end{align}
Recall that $\tau$ is a generic time variable. In evaluating $\delta J / \delta \tof$, variations in $\pos{1,2}$ do not occur, i.e., $\delta \pos{1,2} = \bm{0}$. Thus,
\begin{align}
    \label{eq:dv1_dtof}\frac{\delta \vella{1}}{\delta \tof} & = -\bm{\phi}_{\bm{rv}}^{-1} \frac{d \pos{2}}{d\tau} = -\bm{\phi}_{\bm{rv}}^{-1}\vella{2}, \\
    \label{eq:dv2_dtof} \frac{\delta \vella{2}}{\delta \tof} & = -\bm{\phi}_{\bm{vv}} \bm{\phi}_{\bm{vr}}^{-1} \frac{d \pos{2}}{d\tau } + \frac{d \vella{2}}{d\tau},
\end{align}
from a straightforward extension of \citet{arora2015partial}\footnote{While in \citet{arora2015partial} the departure and arrival epochs are separate variables and sensitivities to these epochs are separately supplied, the impact of epochs is considered through $\ma{1,2}$ in the current analysis and only the sensitivity to $\tof$ is required, resulting in rather simple expressions as in Eqs. \eqref{eq:dv1_dtof}-\eqref{eq:dv2_dtof}.}. The explicit sensitivity of $J$ with respect to $\tof$ is then,
\begin{align}
    \label{eq:dJ_deltof}\frac{\delta J}{\delta \tof} = - \frac{\dv{1}^\transpose}{\norm{\dv{1}}}\frac{\delta \vella{1}}{\delta\tof} - \frac{\dv{2}^\transpose}{\norm{\dv{2}}}\frac{\delta\vella{2}}{\delta \tof}.
\end{align}
As units of $\partial J/\partial M$ are different from $\delta J/ \delta \tof$ (or $\partial J /\partial \tof$), a proper scaling for $\delta J/ \delta \tof$ reduces potential numerical instabilities. In the current analysis, $\tof$ is divided by $1000$ seconds to supply an arbitrary nondimensional time unit; such quantities may be redefined according to the periods of the departure and arrival ellipses.

\subsection{\label{sec:hessian}Second Order Derivatives: \texorpdfstring{$\nabla^2 J$}{}}

Second-order derivatives of $J$ serve two purposes: (1) classifying stationary solutions as minima, maxima, or saddle points via the Hessian eigenvalues (Eq.~\eqref{eq:hessian}), and (2) constructing the Jacobian $\partial \c / \partial \fv$ required by the differential corrections process to enforce Eq.~\eqref{eq:c}. The latter is evaluated as,
\begin{align}
    \label{eq:dc_dfv} \frac{\partial \c}{\partial \fv} = \mb \frac{\partial ^2J}{\partial x_1 \partial \ma{1}} & \frac{\partial ^2J}{\partial x_1 \partial \ma{2}}  & \frac{\partial ^2J}{\partial x_1 \partial \tof} \\ \frac{\partial ^2J}{\partial x_2 \partial \ma{1}} & \frac{\partial ^2J}{\partial x_2 \partial \ma{2}}  & \frac{\partial ^2J}{\partial x_2 \partial \tof}  \me.
\end{align}
Multiple strategies exist to (semi-)analytically supply the Hessian information. Examples include the state transition tensors \cite{park2007nonlinear}, differential algebra \cite{lizia2008application, shu2022higher}, and geometry-based methods, e.g., vercosine formulation as disclosed in the appended Fortran code from \citet{russell2019solution}. While these (semi-)analytical second-order derivatives may be considered more accurate, it is true that they entail additional implementation complexities. In the current analysis, such information is numerically delivered with the central finite differencing technique, perturbing with a small number $h = 1\cdot 10^{-7}$. For example,
\begin{align}
    \frac{\partial ^2 J}{\partial  \ma{1} \partial \ma{2}}\bigg \rvert_{(\ma{1}^*, \ma{2}^*, \tof^*)} \approx \left( \frac{\partial J}{\partial  \ma{1}} \bigg \rvert_{(\ma{1}^*, \ma{2}^*+h, \tof^*)}  - \frac{\partial J}{\partial \ma{1}}\bigg \rvert_{(\ma{1}^*, \ma{2}^*-h, \tof^*)}  \right) / (2h).
\end{align}
In the following numerical examples, this differencing technique is sufficient to deliver the required second-order derivatives in Eqs. \eqref{eq:hessian}-\eqref{eq:dc_dfv}. Thus, exploring alternatives for delivering the second-order derivatives remains out of the scope for the current analysis.

\section{Asymptotic Behavior at \texorpdfstring{$\tof \rightarrow 0$}{} in the Angular Domain}
\label{app:t0}

The asymptotic case at $\tof\rightarrow0$ implies an instantaneous change in position between two orbits at $\ma{1}, \ma{2}$ with $J \rightarrow \infty$ with $|\vella{1}|, |\vella{2}|$ approaching $\infty$. In such a limiting case, the following relationship holds,
\begin{align}
    J \approx |\vella{1}| + |\vella{2}| \approx 2\frac{\dist}{\tof},
\end{align}
where $\dist$ is the distance traveled along the Lambert arc as $\tof \rightarrow 0$. Two possible scenarios emerge,
\begin{align}
    \dist & = \norm{\pos{1}(\ma{1}) - \pos{2}(\ma{2})}, \quad \text{The solution does not go through the central body (short transfer)}, \\
    &  =  \norm{\pos{1}(\ma{1})} + \norm{\pos{2}(\ma{2})}, \quad \text{The solution goes through the central body (long transfer)}.
\end{align}
In the former, the Lambert arc consists of one (nearly-)straight line. In the latter, the arc results in two (nearly-)straight lines that instantaneously change direction with the intersection with the central body\footnote{At its limiting case, $\tof \rightarrow 0$ forces the trajectory to approach the central body (Earth) in a nearly rectilinear fashion; then, the central body pulls the trajectory with a turn angle that approximates an impulsive burn of infinite magnitude at the center of the body; obviously, the collision at the surface is ignored for a theoretical construction of such transfers.}. Thus, the optimality constraint from Eq. \eqref{eq:angular_optimality} interchanges with,
\begin{align}
    \label{eq:angular_optimality_dist} \left[ \frac{d \dist}{d\ma{1}}, \frac{d  \dist}{d\ma{2}}\right]^\intercal = \bm{0}, 
\end{align}
i.e., combinations of $\ma{1}, \ma{2}$ that result in stationary distance traveled are searched. For illustration, consider Fig. \ref{fig:t0_short}. Figure \ref{fig:t0_short_m1m2.png} takes a sample value of $\tof = 10$ seconds, a very small value to approximate the $\tof \rightarrow 0$ behavior. A contour plot of $J$ is generated for each combination of $\ma{1}, \ma{2}$, assuming short transfers. While all Lambert arcs are associated with impractical $J$ values, it is obvious that variations in $J$ are still present. The contour plot in Fig. \ref{fig:t0_short_m1m2_dist.png} depicts the distance $\dist =\norm{\pos{1}-\pos{2}}$ connecting two locations; two contour plots in Figs. \ref{fig:t0_short_m1m2.png} and \ref{fig:t0_short_m1m2_dist.png} demonstrate nearly identical behaviors. The long transfers are examined in Fig. \ref{fig:t0_long}. Figure \ref{fig:t0_long_m1m2.png} demonstrates a contour plot of $J$ at $\tof = 10$ seconds. It is clear that the function is an independent combination of $\ma{1}$ and $\ma{2}$; at the center of the plot, $\ma{1} = \ma{2} = \pi$, the Lambert arc connects the apogees of two orbits. Thus, maximal distance is traveled and results in the largest cost $J$. Similarly, $\ma{1} = \ma{2} = 0$ results in minimally traveled distance and the lowest cost $J$. The Lambert arc corresponding to the maximal cost is illustrated in Fig. \ref{fig:t0_long_lambert_arc.png} within the Earth-centered inertial frame. The \taken{dashed} lines denote the direction of the eccentricity vectors of the two orbits. Clearly, the Lambert arc links apogees of the orbits. In denoting these asymptotic cases of the optimal Lambert families at $\tof \rightarrow 0$, following notation is introduced, 
\begin{align}
    0^{a}_{b,c}.
\end{align}
Identical to Eq. \eqref{eq:asymptote_notation}, $\mathrm{a} = \mathrm{s}, \mathrm{l}$ denote the short and long transfer scenarios. The Hessian information is classified by $b$ with $m, M, \sigma$ denoting the local minimum, maximum, and saddle, respectively. Finally, $c \geq 1$ is the index to enumerate multiple cases under the same category. The asymptotic locations for the baseline scenario in Table \ref{table:sample_orbits_first} are included in Table \ref{table:t0}. 

\begin{table}[htpb]
\centering
\caption{\label{table:t0}Approximate asymptotic stationary points ($\tof \rightarrow 0$) in the angular domain for the baseline scenario (Table \ref{table:sample_orbits_first})}
\begin{tabular}{l c c }
\toprule
Label & $\ma{1}$ [rad] & $\ma{2}$ [rad] \\\midrule
$0^\mathrm{s}_{M,1}$ & $3.02$  & $4.71$  \\
$0^\mathrm{s}_{\sigma,1}$ & $5.65$ & $0.75$ \\
$0^\mathrm{s}_{\sigma,2}$ & $2.32$ & $0.69$ \\
$0^\mathrm{s}_{\sigma,3}$ & $5.65$ & $3.96$ \\
$0^\mathrm{s}_{m,1}$ & $0.57$  & $5.53$ \\
$0^\mathrm{s}_{m,2}$ & $4.21$ & $2.26$ \\
$0^\mathrm{l}_{M,1}$ & $\pi$  & $\pi$ \\ 
$0^\mathrm{l}_{\sigma,1}$ & $\pi$ & $0$  \\
$0^\mathrm{l}_{\sigma,2}$ & $0$ & $\pi$ \\
$0^\mathrm{l}_{m,1}$ & $0$ & $0$ \\\bottomrule
\end{tabular}
\end{table}

Two additional interesting insights exist for the $\tof \rightarrow 0$ asymptotic case. First, it serves as asymptotes for the reverse order transfers as well. Moreover, as the velocity along the transfer arc approaches infinity, the asymptotic behaviors ignore all the gravitational influences; thus, the asymptotic behaviors likely extrapolate to more complex dynamical environment, e.g., the Circular Restricted Three-Body Problem (CR3BP). A family of \acrshort{tiots} in the angular domain may be initiated whenever the distance between departure and arrival (periodic) orbits is stationary with respect to the nearby variations. 

 \begin{figure}[h!]
    \centering
    \begin{subfigure}[b]{0.48\textwidth}
        \centering
        \includegraphics[page=68, width=0.99\textwidth]{figures_everything.pdf}
        \caption{Contour plot of $J$ at $\tof = 10$ seconds.}
        \label{fig:t0_short_m1m2.png}
    \end{subfigure}
    \begin{subfigure}[b]{0.48\textwidth}
        \centering
        \includegraphics[page=69, width=0.99\textwidth]{figures_everything.pdf}
        \caption{Contour plot of $\dist= \norm{\pos{1}-\pos{2}}$.}
        \label{fig:t0_short_m1m2_dist.png}
    \end{subfigure}
  \caption{Relationship between cost ($J$) and distance ($\dist$) at $\tof \rightarrow 0$ for $d_1 = \mathrm{s}$ \taken{(short transfer)}, with colored dots indicating Hessian information (Table \ref{table:colormap}).}
  \label{fig:t0_short}
\end{figure}

\begin{figure}[h!]
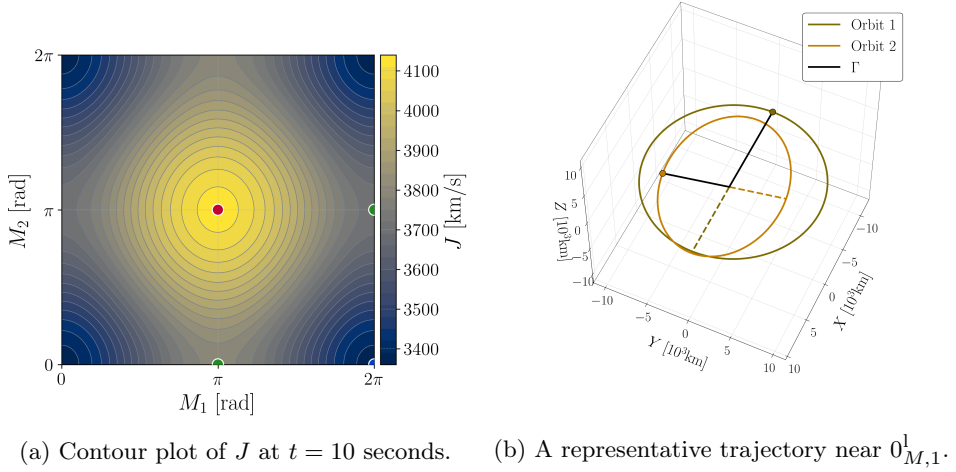

    \centering
    \begin{subfigure}[b]{0.48\textwidth}
        \centering
        \includegraphics[page=70, width=0.99\textwidth]{figures_everything.pdf}
        \caption{Contour plot of $J$ at $\tof = 10$ seconds.}
        \label{fig:t0_long_m1m2.png}
    \end{subfigure}
    \begin{subfigure}[b]{0.48\textwidth}
        \centering
        \includegraphics[page=71, width=0.99\textwidth]{figures_everything.pdf}
        \caption{A representative trajectory near $0^\mathrm{l}_{M,1}$.}
        \label{fig:t0_long_lambert_arc.png}
    \end{subfigure}
  \caption{Asymptotic characteristics at $\tof \rightarrow 0$ ($d_1 = \mathrm{l}$\taken{, long transfer}), with colored dots indicating Hessian information (Table \ref{table:colormap}).}
  \label{fig:t0_long}
\end{figure}

\newpage
\section{Parabolic Lambert Arcs}
\label{app:parabola}

Along a parabolic arc connecting the departure and arrival orbits, the conic equation is given by
\begin{align}
    r_i = \frac{\mathsf{p}}{1 + \cos f_i}, \quad \text{for } i = 1,2,
\end{align}
where $r_i$ denotes the radial distance from the central body at the departure and arrival points, respectively. The quantities $\mathsf{p}$ and $f_i$ represent the semi-latus rectum and the true anomaly at each endpoint. Expanding this relation yields
\begin{align}
    \label{eq:parabola_eq1}
    r_1 \left( 1 + \cos f_1 \right)
    = r_2 \left( 1 + \cos f_2 \right)
    = r_2 \left( 1 + \cos ( f_1 + \theta ) \right),
    \quad \theta = \theta^{\mathrm{s}}, \theta^{\mathrm{l}},
\end{align}
where the transfer angle $\theta$ is defined as
\begin{align}
    \theta^{\mathrm{s}} = \arccos \left( \frac{\pos{1}^\top \pos{2}}{r_1 r_2} \right), \quad
    \theta^{\mathrm{l}} = 2\pi - \theta^{\mathrm{s}},
\end{align}
corresponding to the short and long transfers, respectively, i.e., $d_1 = \mathrm{s}, \mathrm{l}$. Rearranging Eq.~\eqref{eq:parabola_eq1} renders
\begin{align}
    \left( r_1 - r_2 \cos \theta \right) \cos f_1
    + r_2 \sin \theta \sin f_1
    = r_2 - r_1,
\end{align}
which may be expressed in a more compact form as
\begin{align}\label{eq:parabola_eq2}
    \cos \left( f_1 - \delta \right)
    = \frac{C}{\sqrt{A^2 + B^2}},
\end{align}
with
\begin{align}
    A = r_1 - r_2 \cos \theta, \quad
    B = r_2 \sin \theta, \quad
    C = r_2 - r_1, \quad
    \delta = \arctan_2 ( B, A ).
\end{align}
Here, $\arctan_2(\cdot)$ denotes the four-quadrant inverse tangent. As discussed in Section \ref{sec:domain}, parabolic Lambert arcs are employed to examine stationary transfer geometries in terms of the cost function $J$ at $\tof\to\infty$. For a given combination of departure and arrival positions, $\pos{1}$ and $\pos{2}$, two solutions exist for Eq. \eqref{eq:parabola_eq2} as
\begin{align}
    \label{eq:parabola_eq3}f_1 = \delta \pm \arccos \left( \frac{C}{\sqrt{A^2 + B^2}} \right),
\end{align}
where the sign is selected to yield the transfer geometry corresponding to the parabolic limit $\tof \to \infty$. This limiting behavior occurs only when the trajectory traverses $f=\pi$, a condition that uniquely determines the admissible branch and removes the sign ambiguity in Eq. \eqref{eq:parabola_eq3} by selecting the positive sign. Once $f_1$ is determined, the arrival true anomaly follows as $f_2 = f_1 + \theta$, and the semi-latus rectum is given by $p = r_1 ( 1 + \cos f_1 )$. These quantities fully determine the parabolic Lambert arc. Consequently, the transfer cost $J$ may be computed for each sampled combination of $\pos{1}$ and $\pos{2}$, yielding contour plots such as those demonstrated in Figs. \ref{fig:tinf_short_m1m2_analytic.png} (short transfers) and \ref{fig:tinf_long_m1m2_analytic.png} (long transfers) for the baseline transfer scenario (Table \ref{table:sample_orbits_first}).

\section{Proof of Proposition 1}
\label{app:proof}

First, consider the $3$ by $3$ block matrix, $\phi_{\bm{rv}}$, from the \acrshort{stm} (Eq. \eqref{eq:1st_deriv}) as $\tof \rightarrow \infty$. With the singular value decomposition, $\phi_{\bm{rv}} = \bm{U}\bm{S}\bm{V}^\transpose$ with standard definitions; $\bm{U}, \bm{V}$ are left and right singular vectors, respectively, and $\bm{S}$ is a diagonal matrix with singular values. Within the Keplerian dynamics, only the largest singular value $\sigma_1$ grows linearly with time ($O(\tof)$) \cite{tsuda2010series, born1970application}, corresponding to the phase shift due to the change in the semi-major axis; the other two singular values are on the order of $O(1)$ from the super-integrable nature of the dynamics. With the linear time growth, as $\tof \rightarrow \infty$, $\phi_{\bm{rv}}$ approaches the following rank-1 matrix, 
    \begin{align}
        \label{eq:phi_rv_infty}\phi_{\bm{rv}} \rightarrow  \sigma_1 \bm{U}_1 \bm{V}_1^\transpose,
    \end{align}
    where $\bm{U}_1$, $\bm{V}_1$ are left and right singular vectors corresponding to $\sigma_1$, respectively. The right singular vector $\bm{V}_1$ is determined via the gradient of semi-major axis ($a$) with respect to the initial velocity along the transfer arc, $\vella{1}$; such a direction incurs the largest change in $a$ without variation in the initial $\bm{r}$. As such, $\bm{V}_1 = \vella{1}/\norm{\vella{1}}$. The change in the final position vector with the initial variation into this velocity-aligned direction is,
    \begin{align}
        \label{eq:r_tof_infty}\delta \bm{r}(\tof) = \frac{\partial \bm{r}}{\partial M}\delta M = \frac{\bm{v}^\la(\tof)}{n}  \delta n \cdot \tof = -\frac{3a}{\mu}\bm{v}^\la(\tof)(\vella{1} \cdot \delta \vella{1}) \tof,
    \end{align}
where $\delta n = -\frac{3na}{\mu} (\vella{1} \cdot  \delta \vella{1})$ is leveraged. Then, Eq. \eqref{eq:r_tof_infty} is re-written \footnote{From a similar logic, any block matrix of the \acrshort{stm} may be decomposed into this most stretching direction for $\tof \rightarrow \infty$.} as,
\begin{align}
    \label{eq:phi_rv_infty_structured} \frac{\partial \pos{2}}{\partial \vella{1}} \sim  \frac{3a \norm{\vella{1}}\norm{\vella{2}} }{\mu} \left(- \frac{\vella{2}}{\norm{\vella{2}}} \right) \left( \frac{\vella{1}}{\norm{\vella{1}}}\right)^\transpose \tof,
\end{align}
noting that $\bm{v}^\la(\tof) = \vella{2}$. Comparing Eq. \eqref{eq:phi_rv_infty_structured} and \eqref{eq:r_tof_infty}, it is clear that
\begin{align}
    \sigma_1 = \frac{3a \norm{\vella{1}}\norm{\vella{2}} }{\mu}t, \quad \bm{U}_1 =  -\frac{\vella{2}}{\norm{\vella{2}}}, \quad \bm{V}_1 = \frac{\vella{1}}{\norm{\vella{1}}}.
\end{align}
Now, Eq. \eqref{eq:dv1_dtof} renders the following at $\tof \rightarrow \infty$,
\begin{align}
    \frac{\delta \vella{1} }{\delta \tof }
    =  \norm{\vella{2}}\bm{V} \bm{S}^{-1} \bm{U}^\transpose \bm{U}_1
    \sim  \frac{\norm{\vella{2}}}{\sigma_1}\bm{V}_1
    \rightarrow \bm{0},
    \quad \text{as } \tof \rightarrow \infty,
\end{align}
as $\sigma_1 \rightarrow \infty$ at $\tof \rightarrow \infty$. With the \acrshort{stm} propagated in reverse time from the final to the initial points along the Lambert arc $\Gamma$, it is trivial to demonstrate $\frac{\delta \vella{2} }{\delta \tof } \rightarrow \bm{0} $ as well. With these zero vectors, Eq. \eqref{eq:dJ_deltof} also renders a zero vector, i.e., sensitivity of $J$ with respect to $\tof$ disappears with $\tof \rightarrow \infty$. At such an asymptotic case, the two conditions in Eqs. \eqref{eq:dJ_dt}-\eqref{eq:dJ_dtof} become identical to conditions in Eq. \eqref{eq:angular_optimality}. Thus, the $\t-\tof$ domain approaches $\ma{1}-\ma{2}$ domain at $\tof \rightarrow \infty$. 

\section{Application of the Framework to the Angular Domain}
\label{app:angular_domain_family}

This appendix demonstrates the application of the \acrshort{tiot} family framework, developed in Section \ref{sec:framework}, to the angular domain. In contrast to the temporal domain analyzed in the main body, the present appendix focuses on families parameterized within the $\ma{1}-\ma{2}$ domain. \taken{Starting seeds are first supplied through a grid-search procedure. The angular variables $(\ma{1},\ma{2})$ are scanned uniformly over the departure- and arrival-orbit phases leveraging a $50\times50$ grid in the domain $[0,2\pi)\times[0,2\pi)$. For each grid point, several transfer-time values ($1800$, $2400$, $3000$, $3600$, $7200$, and $10800$~s) are utilized as initial guesses for the optimality equations. After continuation and duplicate removal, this procedure yields 15 distinct angular-domain families, namely Families~1--11 and 13--16 (indexed following Table~\ref{table:m1m2_families}). The resulting family structure is then assessed using the asymptotic seeds discussed in Sec.~\ref{sec:dimensionality} and the topology consistency from empirical results. Note that the complete set of asymptotic seeds associated with the limits $\tof\rightarrow0$ and $\tof\rightarrow\infty$ is displayed in Figs.~\ref{fig:t0_short} ($\tof\rightarrow0$, short transfers), \ref{fig:t0_long_m1m2.png} ($\tof\rightarrow0$, long transfers), \ref{fig:tinf_short_m1m2.png} ($\tof\rightarrow \infty$, short transfers), and \ref{fig:tinf_long_m1m2.png} ($\tof\rightarrow \infty$, long transfers). Two observations suggest that the solution set is incomplete. First, the maximum long-transfer extremal associated with the $\tof\rightarrow0$ limit is not connected to any recovered family. Second, inspection of the family topology suggests the existence of an additional branch paired with Family~5 through a singular $\tof\rightarrow0$ limit. Similar paired-family structures are observed elsewhere in the solution space, for example between Families~5 and~8 and between Families~6 and~7 in the time domain analysis. These observations motivate the introduction of an additional manually selected seed near the expected missing branch. Applying the continuation procedure from this seed successfully recovers Family~12. Asymptotic structures and family connectivity thus serve as a diagnostic for incomplete family networks, guiding the introduction of additional seeds when necessary.}

Through this process, a total of 16 unique \acrshort{tiot} families are identified in the angular domain. These families are indexed in Table \ref{table:m1m2_families}, along with their associated asymptotic behaviors and connections to other families. The last column of Table \ref{table:m1m2_families} displays the colors that visualize each family within the three-dimensional ($\ma{1}-\ma{2}-\tof$) space in Fig. \ref{fig:families_m1m2.png}. Overall, the angular domain results confirm that the proposed framework is not restricted to the temporal domain formulation. \taken{The continuation and classification procedures, including the grid-search seeding, are identical to those of the main body; the two formulations differ only in the enforced stationary conditions, with $\partial J/\partial \ma{1} = \partial J/\partial \ma{2} = 0$ (Eq.~\eqref{eq:angular_optimality}) imposed in place of $\partial J/\partial \t = \partial J/\partial \tof = 0$. In the angular domain the asymptotic limits are available at both the $\tof \rightarrow 0$ and $\tof \rightarrow \infty$ ends, the former having no counterpart in the temporal continuation, and serve here as a diagnostic for completeness rather than as the primary seeds. Because the two domains enforce different optimality conditions, they decompose the same solution landscape into different one-parameter families; the larger number identified here (16, versus 12 in the temporal domain) is a natural consequence of the differing formulations rather than an inconsistency.}
\begin{table}[htpb]
\centering
\caption{\label{table:m1m2_families}Summary of indexed \acrshort{tiot} families in the angular domain for the baseline scenario (Table \ref{table:sample_orbits_first})}
\begin{tabular}{l c c c c }
\toprule
Family No. & End point 1 & End point 2 & Connections (if any) & Color \\\midrule
1  & $\infty^\mathrm{l}_{m, 2}$      & $\pi_2$     & Family 14                           & \linefamsolid{colFamOne} \\
2  & $\infty^\mathrm{l}_{m,1}$ & $\pi_1$    & Family 8           & \linefamsolid{colFamTwo} \\
3  & $\infty^\mathrm{l}_{\sigma, 1}$& $\pi_1$     & -                                         & \linefamsolid{colFamThree} \\
4  & $\infty^\mathrm{l}_{\sigma, 2}$& $0^\mathrm{l}_{\sigma, 1}$             & -        & \linefamsolid{colFamFour} \\
5  & $\infty^\mathrm{l}_{\sigma, 3}$     & $\pi_2$             & -               & \linefamsolid{colFamFive} \\
6  & $\infty^\mathrm{l}_{M, 1}$& $\pi_1$             & Family 9                      & \linefamsolid{colFamSix} \\
7  & $\infty^\mathrm{s}_{\sigma, 2}$& $0^\mathrm{s}_{\sigma,1}$  & -                                            & \linefamsolid{colFamSeven} \\
8  & $\infty^\mathrm{s}_{\sigma, 4}$& $\pi_1$     & Family 2                                           & \linefamsolid{colFamEight} \\
9  & $\infty^\mathrm{s}_{\sigma, 1}$     & $\pi_1$            & Family 6                                  & \linefamsolid{colFamNine} \\
10 & $\infty^\mathrm{s}_{\sigma, 3}$          & $0^\mathrm{s}_{\sigma, 2}$      & -                                          & \linefamsolid{colFamTen} \\
11 &  $0^\mathrm{l}_{m,1}$         &   $0^\mathrm{l}_{\sigma,2}$        & -                                     & \linefamsolid{colFamEleven} \\
12 &    $0^\mathrm{l}_{M,1}$       &  $\pi_2$           & -                                  & \linefamsolid{colFamTwelve} \\
13 &  $0^\mathrm{s}_{\sigma,3}$         & $0^\mathrm{s}_{m,1}$            & -                                    & \linefamsolid{colFamThirteen} \\
14 &   $0^\mathrm{s}_{m, 2}$        &  $\pi_2$          & Family 1                                    & \linefamsolid{colFamFourteen} \\
15 &   $0^\mathrm{s}_{M, 1}$        &  $\pi_2$          & Family 16                                    & \linefamsolid{colFamFifteen} \\
16 &   $\pi_1$        &  $\pi_2$          & Family 15                                    & \linefamsolid{colFamSixteen} \\\bottomrule
\end{tabular}
\end{table}

\begin{figure}[htpb]
    \centering
    \includegraphics[page=73, width=0.5\textwidth]{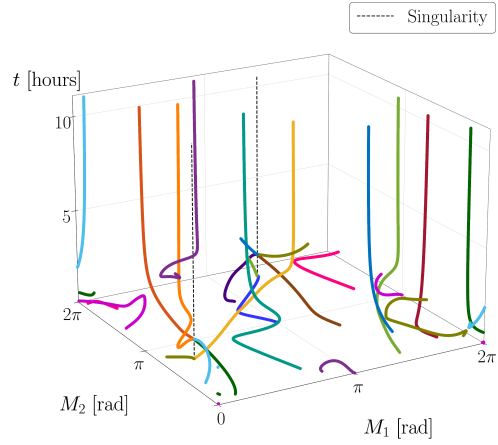}
    \caption{Global structure of \acrshort{tiot} families in the angular domain colored by their indices.}
    \label{fig:families_m1m2.png}
\end{figure}

\newpage
\section{Projection of Temporal Domain Families onto Porkchop Plots}
\label{app:family_porkchop}

Figure \ref{fig:orbit1_family_intersection} presents the projection of the families obtained in the temporal domain onto porkchop plots. The corresponding orbital geometries for each family are supplied in Fig.~\ref{fig:orbit1_family_geometry}.
\newpage

\begin{figure}[h!]
  \centering
  \begin{subfigure}[b]{0.99\textwidth}
    \centering
    \includegraphics[page=38, width=0.99\textwidth]{figures_everything.pdf}
    \caption{\label{fig:orbit1_family_intersection_seed8(family1)}Family 1.}
  \end{subfigure}
  \hfill
  \begin{subfigure}[b]{0.99\textwidth}
    \centering
    \includegraphics[page=39, width=0.99\textwidth]{figures_everything.pdf}
    \caption{\label{fig:orbit1_family_intersection_seed9(family2)}Family 2.}
  \end{subfigure}
  \hfill
  \begin{subfigure}[b]{0.99\textwidth}
    \centering
    \includegraphics[page=40, width=0.99\textwidth]{figures_everything.pdf}
    \caption{\label{fig:orbit1_family_intersection_seed6(family3)}Family 3.}
  \end{subfigure}
  \begin{subfigure}[b]{0.99\textwidth}
    \centering
    \includegraphics[page=41, width=0.99\textwidth]{figures_everything.pdf}
    \caption{\label{fig:orbit1_family_intersection_seed7(family4)}Family 4.}
  \end{subfigure}
\end{figure}
\newpage

\begin{figure}[h!] \ContinuedFloat
  \centering
  \begin{subfigure}[b]{0.99\textwidth}
    \centering
    \includegraphics[page=42, width=0.99\textwidth]{figures_everything.pdf}
    \caption{\label{fig:orbit1_family_intersection_seed4(family5)}Family 5.}
  \end{subfigure}
  \begin{subfigure}[b]{0.99\textwidth}
    \centering
    \includegraphics[page=43, width=0.99\textwidth]{figures_everything.pdf}
    \caption{\label{fig:orbit1_family_intersection_seed5(family6)}Family 6.}
  \end{subfigure}
  \begin{subfigure}[b]{0.99\textwidth}
    \centering
    \includegraphics[page=44, width=0.99\textwidth]{figures_everything.pdf}
    \caption{\label{fig:orbit1_family_intersection_seed10(family7)}Family 7.}
  \end{subfigure}
  \begin{subfigure}[b]{0.99\textwidth}
    \centering
    \includegraphics[page=45, width=0.99\textwidth]{figures_everything.pdf}
    \caption{\label{fig:orbit1_family_intersection_seed0(family8)}Family 8.}
  \end{subfigure}
\end{figure}
\newpage

\begin{figure}[h!]
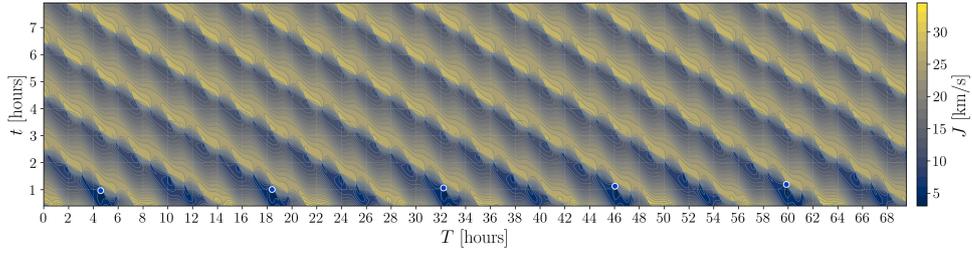
 \ContinuedFloat
  \centering
  \begin{subfigure}[b]{0.99\textwidth}
    \centering
    \includegraphics[page=46, width=0.99\textwidth]{figures_everything.pdf}
    \caption{\label{fig:orbit1_family_intersection_seed1(family9)}Family 9.}
  \end{subfigure}
  \begin{subfigure}[b]{0.99\textwidth}
    \centering
    \includegraphics[page=47, width=0.99\textwidth]{figures_everything.pdf}
    \caption{\label{fig:orbit1_family_intersection_seed3(family10)}Family 10.}
  \end{subfigure}
  \begin{subfigure}[b]{0.99\textwidth}
    \centering
    \includegraphics[page=48, width=0.99\textwidth]{figures_everything.pdf}
    \caption{\label{fig:orbit1_family_intersection_seed2(family11)}Family 11.}
  \end{subfigure}
  \begin{subfigure}[b]{0.99\textwidth}
    \centering
    \includegraphics[page=49, width=0.99\textwidth]{figures_everything.pdf}
    \caption{\label{fig:orbit1_family_intersection_seed11(family12)}Family 12.}
  \end{subfigure}
  \caption{Visual catalog of the 12 identified \acrshort{tiot} families (Fig. \ref{fig:family_connection1}) projected onto porkchop plots for the baseline scenario (Table \ref{table:family_index}).}
  \label{fig:orbit1_family_intersection}
\end{figure}
\end{appendices}
\newpage

\bmhead{Funding Sources}

Dr. Beom Park was partially supported by the Apollo 11 Postdoctoral Fellowship from the School of Aeronautics and Astronautics at Purdue University, USA. This work was supported by the InnoCORE program of the Ministry of Science and ICT of the Republic of Korea (KAIST N10250155). 

\bmhead{Acknowledgments}

Dr. Park would like to thank the members of the Strategic Aerospace Initiative (SAI) research group at KAIST for their hospitality and invaluable feedback during his stay in the summer of 2025. Dr. Park also acknowledges valuable discussions with Dr. Mitch Dominguez on the continuation scheme.

\bibliography{reference}

\end{document}